\documentclass[12pt]{amsart}

\usepackage[utf8]{inputenc}

\usepackage{amsfonts}
\usepackage{amsmath}
\usepackage{amssymb}
\usepackage{mathtools}
\usepackage{tabularx}

\usepackage{geometry} 
\usepackage{pxfonts}
\usepackage{euscript}
\usepackage{bbold,bbm}
\usepackage{comment}
\usepackage{scalerel}
\usepackage{xcolor}

\usepackage{hyperref}
\usepackage{graphics}
\usepackage{epstopdf} 
\usepackage[all,2cell,arrow,matrix,tips]{xy} \UseAllTwocells \SilentMatrices
\usepackage{graphicx}
\usepackage{verbatim}
\usepackage{leftidx}
\usepackage{enumerate}
\usepackage{phonetic}
\usepackage{lipsum}
\usepackage{subfigure}
\usepackage{float}
\usepackage{extarrows}

\theoremstyle{definition}
\newtheorem{thm}{Theorem}[section]
\newtheorem{prop}[thm]{Proposition}
\newtheorem{cor}[thm]{Corollary}
\newtheorem{lem}[thm]{Lemma}
\newtheorem{rem}[thm]{Remark}
\newtheorem{example}[thm]{Example}

\newtheorem*{question}{Question}

\newcommand\Nb {\mathbb{N}}
\newcommand\Zb {\mathbb{Z}}

\newcommand\Cb {\mathbb{C}}

\newcommand\CA {\EuScript{A}}
\newcommand\CB {\EuScript{B}}
\newcommand\CC {\EuScript{C}}
\newcommand\CD {\EuScript{D}}

\newcommand\CH {\EuScript{H}}
\newcommand\CI {\EuScript{I}}
\newcommand\CJ {\EuScript{J}}
\newcommand\CK {\EuScript{K}}
\newcommand\CL {\EuScript{L}}

\newcommand\CS {\EuScript{S}}

\newcommand{\FI} {\mathfrak{I}}

\newcommand{\Clb} {\mathcal{B}}

\newcommand{\Clk} {\mathcal{K}}
\newcommand{\Cll} {\mathcal{L}}
\newcommand{\Clm} {\mathcal{M}}

\newcommand{\Clz} {\mathcal{Z}}

\DeclareMathOperator{\Alg}{Alg}
\DeclareMathOperator{\Lat}{Lat}
\DeclareMathOperator{\id}{id}
\DeclareMathOperator{\Ad}{Ad}
\DeclareMathOperator{\wot}{wot}
\DeclareMathOperator{\Wot}{WOT}
\DeclareMathOperator{\diag}{d}
\DeclareMathOperator{\prim}{Prim}
\DeclareMathOperator{\mprim}{m-Prim}
\DeclareMathOperator{\rep}{Rep_{2}}
\DeclareMathOperator{\ev}{ev}
\DeclareMathOperator{\aut}{Aut}
\newcommand{\cai} {\mathbf{cai}}

\DeclareMathOperator{\hull}{hull}
\DeclareMathOperator{\Span}{Span}

\DeclarePairedDelimiterX\braket[2]{\langle}{\rangle}{#1 \delimsize\vert #2}

\begin{document}

\title[Rigidity for Isomorphisms]{Rigidity for Isomorphisms between Operator Algebras with Commutative Diagonals}

\author{Elias G. Katsoulis}
\address{Department of Mathematics, East Carolina University, Greenville, NC 27858, USA.}
\email{katsoulise@ecu.edu}

\author{Feifei Miao}
\address{School of Mathematics and Information Science, Shandong Technology and Business University, Yantai, Shandong, 264005, China; Yantai Key Laboratory of Big Data Modeling and Intelligent Computing, Yantai, Shandong, 264005}
\email{202314114@sdtbu.edu.cn}

\author{Wenming Wu}
\address{School of Mathematical Sciences, Chongqing Normal University, Chongqing, 401331, China.}
\email{wuwm@amss.ac.cn}

\author{Wei Yuan}
\address{Institute of Mathematics, Academy of Mathematics and Systems Science, Chinese Academy of Sciences, Beijing, 100190, China; School of Mathematical Sciences, University of Chinese Academy of Sciences, Beijing 100049, China.}
\email{wyuan@math.ac.cn}

\date{}

\begin{abstract}
We show that two families of operator algebras, the CSL algebras of multiplicity free CSLs and the semicrossed products of commutative C$^*$-algebras, demonstrate a strong form of rigidity with respect to isometric isomorphisms. Specifically, the isomorphism class of any such algebra remains unchanged within its family, even if we allow for isomorphism after tensoring with operator algebras containing the compact operators. For semicrossed products of commutative C$^*$-algebras, the same conclusion holds even when tensoring with operator algebras whose diagonals are irreducibly acting. Collectively, these results imply rigidity with respect to stable isomorphisms: two algebras are isometrically isomorphic if and only if they are stably isomorphic. 
\end{abstract}

\subjclass[2020]{47L40, 47L55, 47L65, 46L05}
\keywords{stable isomorphism, CSL algebra, semicrossed product} 

\maketitle

\section{Introduction}
Motivated by the work of Dor-On, Eilers, and Geffen \cite{DEG20}, as well as by the classification problem for non-selfadjoint crossed products, Kakariadis, Katsoulis and Li \cite{KaKaLi23} recently investigated the stable isomorphisms of operator algebras. Among other results, they showed that if two such algebras have diagonals that are either $c_0$-isomorphic or satisfy cancellation and have $K_0$-groups isomorphic to $\mathbb{Z}$, then stable isomorphism implies isometric isomorphism of the original algebras. In this paper, we continue the investigation of \cite{KaKaLi23} by identifying additional families of operator algebras for which any two stably isomorphic algebras within the same family are, in fact, isometrically isomorphic--and, as we shall see, even stronger conclusions can be drawn. 

To set the stage for our results, we first establish some notation. Let $\Clk$ denote the algebra of compact operators on a separable Hilbert space $\CK$ (for instance, $\CK = l^2(\Nb)$). For an operator algebra $\CA \subset \Clb(\CH)$ acting on a Hilbert space $\CH$, its diagonal is defined by 
\begin{align*}
 \diag(\CA) :=  \CA \cap \CA^*.
\end{align*}
For the rest of the paper we make the blanket assumption that \textit{all operator algebras necessarily contain in their diagonal a contractive approximate identity for the algebra.}
The stabilization of an operator algebra $\CA$ is the minimal tensor product
\begin{align*}
  \CA \otimes_{\min} \Clk \subset \Clb(\CH \otimes \CK).  
\end{align*}
The definition of the C$^*$-algebra $\diag(\CA)$ and the operator algebra $\CA \otimes_{\min} \Clk$ are independent of the particular representation of $\CA$ (see, for example, Section~2.1 and 2.2 of \cite{BlMe04}). In what follows, we omit the subscript 'min' and write $\CA \otimes_{\min} \Clk$ simply as $\CA \otimes \Clk$, since no other tensor products of operator algebras will be considered. 

The notion of stable isomorphism plays a crucial role in the K-theory of operator algebras. Two operator algebras are said to be \textit{stably (completely) isomorphic} if their stabilization are (completely) isometrically isomorphic. For $\sigma$-unital C$^*$-algebras, a fundamental theorem due to Brown, Green, and Rieffel \cite{BGR77} asserts that stable isomorphism coincides with strong Morita equivalence. This result has since been extended to the realm of non-selfadjoint operator algebras and operator spaces (see \cite{GE16, EK17, GE19} and the reference therein). 

In general, stable isomorphism or various forms of Morita equivalence (e.g, strong/weak Morita equivalence, strong/weak $\Delta$-equivalence) define equivalence relations that are coarser than (completely) isometric isomorphism of operator algebras, even for C$^*$-algebras. However, when restricted to operator algebras with commutative diagonals, the situation simplifies considerably. Indeed, stably isomorphic commutative C$^*$-algebras are necessarily isomorphic. This observation suggests that, for broad classes of operator algebras with commutative diagonals\footnote{Perhaps even for all such algebras.}, allowing stable isomorphisms in place of (isometric) isomorphisms does not enlarge the underlying isomorphism class. As shown in \cite{KaKaLi23}, this holds not only for commutative C$^*$-algebras but also, under mild conditions, for two classes of non-selfadjoint operator algebras whose commutative diagonals are either $c_0$-isomorphic or contractible as topological spaces. 

To develop further these results, the following sections provide additional evidence supporting this assertion. To make the paper as self-contained as possible, we provide the necessary background for the proofs in Section~2 and 3, while relying on \cite{BlMe04}, \cite{GM90}, and \cite{KR97} as our primary references for operator algebras, C$^*$-algebras, and von Neumann algebras, respectively. Throughout the paper, we adopt the convention that uppercase letters denote operators acting on Hilbert spaces, which appear mainly in Section~2, whereas lowercase letters denote abstract operators, which appear mainly in Section~3. 

We begin in Section~2 by studying isomorphisms between CSL algebras. These algebras were introduced in the 1970s by Arveson \cite{Arv2} in a paper that attracted a lot of attention and a strong following. As a well-studied class of operator algebras (see \cite{DavNest} and the references therein), CSL algebras serve a natural candidate for examining the type of rigidity that we are interested in this paper. Since we are concerned only with algebras having commutative diagonals, we restrict our attention to CSL algebras of multiplicity-free CSLs. First, we show that any isometric isomorphism between the stabilizations of such CSL algebras induces a $*$-isomorphism between certain maximal abelian self-adjoint subalgebras in the multiplier algebras of the stabilizations. Then, by adapting an argument similar to that used in the proof of Ringrose’s isomorphism theorem for nest algebras, we deduce in Corollary~\ref{cor:CSL_case} that stable isomorphism of such CSL algebras is equivalent to unitary equivalence of the underlying CSLs. We also note that Eleftherakis and Paulsen \cite{EP08} proved that for two CSL algebras $\CA$ and $\CB$, the algebras $(\CA \otimes \Clb(\CK))^{\wot}$ and $(\CB \otimes \Clb(\CK))^{\wot}$ are completely isometrically isomorphic if and only if there exists a lattice isomorphism between their lattices satisfying a certain continuity condition. Their result is based on earlier work on the Morita theory of reflexive algebras (\cite{GE10}; see also \cite{GE08}). Although our result is proved without using the TRO machinery as in \cite{EP08}, it would be interesting to see whether Corollary~\ref{cor:CSL_case} can also be deduced using TRO techniques.

In Section~3, we shift our attention to semicrossed products by $\Zb$. Such semicrossed products for C$^*$-dynamical systems were introduced by Peters \cite{JP84}, following earlier work of Arveson \cite{Arv1} and Arveson and Josephson \cite{AJ69}, and have been studied extensively over the last 30 years \cite{DKak, DK1, DK3, DK08, HH, KRSigma, Pow1}. The recent monograph of Davidson \cite{Davlast} provides a comprehensive exposition of the core results in the area. For more general semicrossed products, the first systematic study was carried out in \cite{KakK1}, where automorphisms of arbitrary unital operator algebras were considered. As in Section~2, we restrict our attention to isomorphisms of semicrossed products by $\Zb^+$ of abelian C$^*$-algebras since we focus on operator algebras with commutative diagonals. Let $\alpha$ be a $*$-automorphism of a commutative C$^*$-algebra $\CA$. By Theorem 2.12 in \cite{KR19}, there is a canonical isomorphism
\begin{align*}
    \left (\CA \otimes \Clk^{+} \right) \rtimes_{\alpha \otimes \Ad \lambda} \Zb \simeq \left (\CA \rtimes_{\alpha} \Zb^{+} \right) \otimes \Clk, 
\end{align*}
where $\Clk^{+}$ denotes the algebra of upper-triangular compact operators with respect to a $\Zb$-ordered orthonormal basis, $\lambda$ is the left regular representation of $\Zb$, and $\CA \rtimes_{\alpha} \Zb^{+}$ is the semicrossed product as defined in \cite{JP84}. Thus, the classification problem for non-self-adjoint crossed product algebras of the form
\begin{align*}
    \left (\CA \otimes \Clk^{+} \right) \rtimes_{\alpha \otimes \Ad \lambda} \Zb
\end{align*}
reduces to the classification of semicrossed products up to stable isomorphism. In \cite{KaKaLi23} it was shown that for homeomorphisms $\sigma_i$ on a contractible, compact Hausdorff space $X$, the following statements are equivalent:
\begin{itemize}
    \item[(i)] $\sigma_1$ and $\sigma_2$ are conjugate;
    \item[(ii)] $C(X) \rtimes_{\sigma_i} \Zb^{+}$, $i = 1, 2$, are (completely) isometrically isomorphic;
    \item[(iii)] $\left (C(X) \rtimes_{\sigma_i} \Zb^{+} \right) \otimes \Clk$, $i = 1, 2$, are (completely) isometrically isomorphic;
\end{itemize}
By amplifying the triangular matrix representations used in \cite{DK08, DK11}, we remove the assumption that $X$ is contractible and establish the same equivalence under the sole condition that the maps $\sigma_i$, $i= 1, 2$, are proper continuous. In particular, this resolves the classification problem for non-selfadjoint crossed products of the form $   \left (\CA \otimes \Clk^{+} \right) \rtimes_{\alpha \otimes \Ad \lambda} \Zb$, where $\CA$ is commutative and $\alpha$ is a $*$-automorphism.

As mentioned earlier, our results go well beyond stable isomorphisms. More precisely, Theorem~\ref{thm:stabl_to_unitary} implies that if $\CA_i$, $i=1,2$, are CSL algebras of multiplicity-free CSLs, and $\CB_i$, $i=1,2$, are (concrete) operator algebras containing the compact operators, then the algebras $\CA_i\otimes \CB_i$, $i=1,2$, are isometrically isomorphic if and only if the algebras $\CA_i$, $i=1,2$, are. A similar result follows from Theorem~\ref{mainthm;semicr}, where this time $\CA_i$, $i=1,2$, are semicrossed products of commutative C$^*$-algebras, and $\CB_i$, $i=1,2$, are operator algebras with irreducibly acting diagonals, e.g., simple C$^*$-algebras. These results show that the isomorphisms between algebras with commutative diagonals exhibit highly rigid behavior, which we believe deserves further investigation. We intend to explore  this behavior further in future work.

\section{Stable isomorphisms of CSL algebras}

For a set of bounded operators $\CS$ acting on a Hilbert space $\CH$, define
\begin{align*}
    \Lat(\CS) := \{\mbox{$P$ is a projection in $\Clb(\CH)$}: (I-P)TP = 0, \forall T \in \CS\},
\end{align*}
which is a subspace lattice consisting of all projections whose ranges are invariant under every operator in $\CS$. More generally, a subspace lattice is a strongly closed collection of projections on a Hilbert space that is closed under the lattice operations $\wedge$ and $\vee$, and contains both the zero projection $0$ and the identity operator $I$. A commutative subspace lattice (abbr. CSL) is a subspace lattice whose projections all commute.

For a subspace lattice $\CL \subset \Clb(\CH)$, the bounded operators that leave invariant the range of every projection in $\CL$ form an algebra, denoted by $\Alg(\CL)$, whose diagonal coincides with the commutant  
\begin{align*}
    \CL' := \{T \in \Clb(\CH): TP = PT, \forall P \in \CL\} 
\end{align*}
of $\CL$. In particular, if $\CL$ is a CSL, the commutativity of $\diag(\Alg(\CL))$ is equivalent to $\CL$ being multiplicity free, i.e., the von Neumann algebra $\CL''$ generated by the projections in $\CL$ is a maximal abelian self-adjoint algebra (MASA).

In the remainder of this section, we show that two CSL algebras $\Alg(\CL_1)$ and $\Alg(\CL_2)$ with commutative diagonals are isometrically isomorphic if and only if their stabilizations $\Alg(\CL_1) \otimes \Clk$ and $\Alg(\CL_2) \otimes \Clk$ are isometrically isomorphic. In fact, we establish a more general result and derive the above fact as a corollary. In this process, we need to consider the multiplier algebra of $\Alg(\CL_i)$ tensored with an approximately unital operator algebra; therefore, we begin by briefly recalling some basic facts about multiplier algebras of operator algebras.\\

An operator algebra is called \textit{approximately unital} if it possesses a contractive approximate identity ($\cai$). As we mentioned in the introduction, all operator algebras appearing in this paper are assumed approximate unital with a cai contained in their diagonal. Let $\CA$ be an operator algebra. The double dual $\CA^{**}$ of $\CA$, equipped with Arens product, is an operator algebra that admits a norm-one unit, and the canonical isometry $\CA \to \CA^{**}$ is an algebra homomorphism. The multiplier algebra $\Clm(\CA)$ of $\CA$ is the idealizer of $\CA$ in $\CA^{**}$, that is, 
\begin{align*}
    \Clm(\CA) = \{t \in \CA^{**}: t \CA \subset \CA, \CA t \subset \CA\}.
\end{align*}
Moreover, if $\CA \subset \Clb(\CH)$ is a concrete approximately unital operator algebra acting non-degenerately on a Hilbert space $\CH$, then $\Clm(\CA)$ can be identified with the idealizer of $\CA$ in $\Clb(\CH)$:
\begin{align}\label{equ:mul_concrete}
    \{T \in \Clb(\CH): T \CA \subset \CA, \CA T \subset \CA\}
\end{align}
(see Section 2.6 in \cite{BlMe04}).\newline

The following facts about multiplier algebras will be used later. 

\begin{prop}\label{prop:ext_to_double_dual}
    Let $\CA$ and $\CB$ be operator algebras and $\varphi: \CA \to \Clm(\CB)$ be a contractive homomorphism. Then there exists a unique $w^*$-continuous contractive homomorphism $\overline{\varphi}: \CA^{**} \to \CB^{**}$ extending $\varphi$ with $\|\overline{\varphi}\| = \|\varphi\|$. In particular, if $\CB = \Clk$, i.e., $\Clm(\CB) = \Clb(\CK)$, then $\varphi$ extends to a representation of $\CA^{**}$ on the Hilbert space $\CK$. In this case, $\varphi$ is non-degenerate if and only if $\overline{\varphi}(I) = I$. 
\end{prop}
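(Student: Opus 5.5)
The natural route is to pass to second duals and use the standard properties of the Arens product. Since $\Clm(\CB) \subset \CB^{**}$ isometrically and as a subalgebra, I will regard $\varphi$ as a contractive homomorphism $\CA \to \CB^{**}$. Its second adjoint $\varphi^{**}: \CA^{**} \to \CB^{****}$ is $w^*$-continuous with $\|\varphi^{**}\| = \|\varphi\|$. Because $\CB^{**}$ is a dual Banach space, the canonical weak$^*$-continuous projection $\CB^{****} \to \CB^{**}$ is available: it is the adjoint of the inclusion $\CB^* \to \CB^{***}$. I will set $\overline{\varphi} := P \circ \varphi^{**}$.

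The steps are as follows.
\begin{enumerate}
\item[(1)] Show that $\overline{\varphi}$ is $w^*$-continuous, contractive, and extends $\varphi$. All three are routine: on $\CA$, $\varphi^{**}$ agrees with $\iota\circ\varphi$, and $P\circ\iota = \id$.
\item[(2)] Show multiplicativity. I will use Goldstine's theorem to approximate $s,t\in\CA^{**}$ by $w^*$-convergent nets $a_i\to s$ and $b_j\to t$ from $\CA$. The Arens product is separately $w^*$-continuous in the appropriate variables, namely $x\mapsto xy$ for any $y$, and $y\mapsto xy$ for $x$ in the image of $\CA$. Taking iterated limits,
\[
\overline{\varphi}(a_ib_j)=\varphi(a_i)\varphi(b_j)
\]
passes first to $\overline{\varphi}(a_i t)=\varphi(a_i)\overline{\varphi}(t)$ and then to $\overline{\varphi}(st)=\overline{\varphi}(s)\overline{\varphi}(t)$.
\item[(3)] Show uniqueness. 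Any $w^*$-continuous extension is determined on the $w^*$-dense subspace $\CA$.
\end{enumerate}

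I expect step (2) to be the main obstacle. The product $\overline{\varphi}(a_i)\overline{\varphi}(b_j)$ lives in $\CB^{**}$, and the right-hand multiplication there is $w^*$-continuous only by elements of $\CB$, whereas $\varphi(a_i)$ is merely in $\Clm(\CB)$. To get around this, I will use that $\CB^{**}$ is an operator algebra, hence Arens regular, so multiplication in $\CB^{**}$ is separately $w^*$-continuous in each variable. This is the standard fact, via Blecher–Le Merdy, that $\CB^{**}$ is a dual operator algebra. With Arens regularity in hand, both limits in step (2) go through.

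For the special case $\CB=\Clk$, we have $\Clk^{**}=\Clb(\CK)=\Clm(\Clk)$, so $\overline{\varphi}$ is a $w^*$-continuous representation of $\CA^{**}$ on $\CK$. For the non-degeneracy claim, let $(e_\lambda)$ be a cai of $\CA$; it converges $w^*$ to the unit $I$ of $\CA^{**}$. Then $\overline{\varphi}(I)$ is the WOT-limit of $\varphi(e_\lambda)$, and this limit is an idempotent contraction, hence a projection $Q$.

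It remains to identify $Q$ with the projection onto the essential space $[\varphi(\CA)\CK]$:
\begin{itemize}
\item We have $Q\varphi(a)=\overline{\varphi}(Ia)=\varphi(a)$ for all $a\in\CA$, so $Q$ acts as the identity on $[\varphi(\CA)\CK]$.
\item The range of $Q$ lies in the WOT-closure of $\varphi(\CA)\CK$. Indeed, $Q\xi$ is the weak limit of $\varphi(e_\lambda)\xi$, and a weakly closed subspace is norm closed.
\end{itemize}
Hence $Q$ is the projection onto $[\varphi(\CA)\CK]$, and $\varphi$ is non-degenerate exactly when this space is all of $\CK$, that is, when $\overline{\varphi}(I)=I$.
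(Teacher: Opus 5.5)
Your proposal is correct and follows the paper's route. The paper also views $\varphi$ as a map into $\CB^{**}$, takes its unique $w^*$-continuous extension to $\CA^{**}$ (Lemma A.2.2 of Blecher--Le Merdy, which is exactly your $P\circ\varphi^{**}$), and gets multiplicativity from Goldstine's theorem together with separate $w^*$-continuity of the Arens products on $\CA^{**}$ and $\CB^{**}$. Your argument that $\overline{\varphi}(I)$ is the projection onto $[\varphi(\CA)\CK]$ spells out the non-degeneracy claim, which the paper only states as following from $\Clk^{**}=\Clb(\CK)$.
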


\begin{proof}
By composing $\varphi$ with the canonical isometry $\Clm(\CB) \to \CB^{**}$, we can view $\varphi$ as a bounded homomorphism from $\CA$ into $\CB^{**}$. Hence there exists a unique $w^*$-continuous extension $\overline{\varphi} : \CA^{**} \to \CB^{**}$ with $\|\overline{\varphi}\| = \|\varphi\|\leq 1$ by Lemma A.2.2 in \cite{BlMe04}. Since the Arens products on $\CA^{**}$ and $\CB^{**}$ are separately $w^*$-continuous, Goldstine’s Theorem implies that $\overline{\varphi}$ is an algebra homomorphism. Finally, note that $\Clk^{**} = \CB(\CK)$, the last statement follows.
\end{proof}

The following result appears in \cite[Proposition 2.6.12]{BlMe04} but with the stronger assumption that the homomorphism $\phi$ be completely contractive. As we need $\phi$ to be merely contractive, we provide a proof for completeness.

\begin{prop}\label{prop:ext_to_mul}
Let $\CA$ and $\CB$ be operator algebras, and let $\varphi: \CA \to \Clm(\CB)$ be a multiplier-nondegenerate contractive homomorphism, i.e., for every $\cai$ $\{e_i\}$ of $\CA$,  
\begin{align}\label{equ:prop:ext_to_mul_I}
    \lim_{i} \varphi(e_i) b = b = \lim_{i} b\varphi(e_i), \quad \forall b \in \CB. 
\end{align}
Then $\varphi$ extends uniquely to a unital homomorphism $\overline{\varphi}: \Clm(\CA) \to \Clm(\CB)$ with $\|\overline{\varphi}\| = \|\varphi\|$. Moreover, $\overline{\varphi}$ is an isometry whenever $\varphi$ is.
\end{prop}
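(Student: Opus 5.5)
We will extend $\varphi$ through the double duals using Proposition~\ref{prop:ext_to_double_dual} and then restrict to $\Clm(\CA)$. Proposition~\ref{prop:ext_to_double_dual} gives a unique $w^*$-continuous homomorphism $\tilde\varphi:\CA^{**}\to\CB^{**}$ extending $\varphi$, with $\|\tilde\varphi\|=\|\varphi\|$. We define $\overline{\varphi}:=\tilde\varphi|_{\Clm(\CA)}$. It then suffices to show three things: $\overline{\varphi}$ takes values in $\Clm(\CB)$, it is unital, and it is unique and isometric when $\varphi$ is.

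\emph{Unitality.} Let $\{e_i\}$ be a cai of $\CA$. Then $e_i\to 1_{\CA^{**}}$ weak$^*$, since $1_{\CA^{**}}$ is the $w^*$-limit of any cai. By $w^*$-continuity, $\varphi(e_i)\to\tilde\varphi(1)$ weak$^*$ in $\CB^{**}$. By \eqref{equ:prop:ext_to_mul_I}, $\varphi(e_i)b\to b$ in norm for every $b\in\CB$. On the other hand, separate $w^*$-continuity of the Arens product gives $\varphi(e_i)b\to\tilde\varphi(1)b$ weak$^*$. Hence $\tilde\varphi(1)b=b$, and similarly $b\tilde\varphi(1)=b$, for all $b\in\CB$. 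By $w^*$-density of $\CB$ in $\CB^{**}$ and separate continuity, $\tilde\varphi(1)$ is the identity of $\CB^{**}$.

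\emph{Range in $\Clm(\CB)$.} Fix $t\in\Clm(\CA)$ and $b\in\CB$. We have $\tilde\varphi(t)b=\lim_i\tilde\varphi(t)\varphi(e_i)b$, where the limit is in norm, because left multiplication by $\tilde\varphi(t)$ is norm continuous and $\varphi(e_i)b\to b$ in norm. Now $\tilde\varphi(t)\varphi(e_i)=\varphi(te_i)$, and $te_i\in\CA$, so $\varphi(te_i)\in\Clm(\CB)$. Therefore $\varphi(te_i)b\in\CB$. Since $\CB$ is norm closed in $\CB^{**}$, we get $\tilde\varphi(t)b\in\CB$. The argument for $b\tilde\varphi(t)$ is symmetric. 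Hence $\overline\varphi$ maps into $\Clm(\CB)$ and $\|\overline\varphi\|\le\|\varphi\|$; the reverse inequality holds because $\overline\varphi$ extends $\varphi$.

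\emph{Uniqueness.} Suppose $\psi$ is another homomorphism extending $\varphi$. Then $\psi(t)\varphi(a)b=\varphi(ta)b=\overline\varphi(t)\varphi(a)b$ for all $a\in\CA$ and $b\in\CB$. Taking $a=e_i$ and using \eqref{equ:prop:ext_to_mul_I} gives $\psi(t)b=\overline\varphi(t)b$ for all $b$. Since elements of $\Clm(\CB)$ are determined by their left action on $\CB$ (as $\CB$ has a cai), $\psi=\overline\varphi$. Note that this uses only the homomorphism property and not continuity.

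\emph{Isometry.} This is the main obstacle, because $\tilde\varphi$ need not be isometric on all of $\CA^{**}$. Assume $\varphi$ is isometric and fix $t\in\Clm(\CA)$. We use the fact that the norm on $\Clm(\CA)$ is computed by left multiplication: $\|t\|=\sup\{\|ta\|:a\in\mathrm{Ball}(\CA)\}$. This holds because $\|t\|=\lim_i\|te_i\|$, obtained from $te_i\to t$ weak$^*$ and lower semicontinuity of the norm. Then
\[
\|\overline\varphi(t)\|\ge\sup_{a\in\mathrm{Ball}(\CA)}\|\overline\varphi(t)\varphi(a)\|=\sup_{a\in\mathrm{Ball}(\CA)}\|\varphi(ta)\|=\sup_{a\in\mathrm{Ball}(\CA)}\|ta\|=\|t\|.
\]
The first inequality uses $\|\varphi(a)\|\le1$ for $a$ in the unit ball. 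Combined with $\|\overline\varphi\|\le1$, this gives $\|\overline\varphi(t)\|=\|t\|$, so $\overline\varphi$ is an isometry.
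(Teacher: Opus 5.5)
Your proposal is correct and follows essentially the same route as the paper. You extend $\varphi$ to the biduals via Proposition~\ref{prop:ext_to_double_dual}, use the cai to show the extension maps $\Clm(\CA)$ into $\Clm(\CB)$ and is unital, and compute norms through $\|t\|=\sup_{a}\|ta\|$. The differences are minor: you get uniqueness from norm convergence of $\varphi(e_i)b$ rather than $w^*$-convergence, and for the isometry you use only the trivial lower bound $\|\overline{\varphi}(t)\|\ge\sup_a\|\varphi(ta)\|$ plus contractivity, instead of the paper's two-sided formula from Theorem 2.6.2 of Blecher--Le Merdy. This makes your argument, if anything, slightly more self-contained.
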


\begin{proof}
    By Proposition~\ref{prop:ext_to_double_dual}, there exists a $w^*$-continuous homomorphism $\overline{\varphi}: \CA^{**} \to \CB^{**}$ extending $\varphi$ with $\|\overline{\varphi}\| = \|\varphi\|$. For every $t \in \Clm(\CA)$, note that $\varphi(te_i), \varphi(e_i t) \in \Clm(\CB)$. Then $\overline{\varphi}(t) \in \Clm(\CB)$,  since for every $b \in \CB$, 
\begin{align*}
    &\overline{\varphi}(t) b = \lim_{i} \overline{\varphi}(t) \varphi(e_i) b = \lim_{i} \varphi(t e_i) b \in \CB \mbox{, and }\\
    &b\overline{\varphi}(t) = \lim_{i} b \varphi(e_i) \overline{\varphi}(t)   = \lim_{i} b \varphi(e_i t)  \in \CB.
\end{align*}
    To show that $\overline{\varphi}$ is unital, note that $e_i \to I$ in the $w^*$-topology by Proposition 2.5.8 in \cite{BlMe04}. Equation~\eqref{equ:prop:ext_to_mul_I} then reads $\overline{\varphi}(I)b = b = b \overline{\varphi}(I)$. It follows from Goldstine’s Theorem and the separate $w^*$-continuity of the Arens product that $\overline{\varphi}(I) = I$. The uniqueness of the extension then follows from the observation that, for any extension $\rho: \Clm(\CA) \to \Clm(\CB)$ of $\varphi$, we have $\varphi(te_i) = \rho(t) \varphi(e_i)$, which converges in the $w^*$-topology to both $\overline{\varphi}(t)$ and $\rho(t)$.

If $\varphi$ is an isometry, then, using the description of the multiplier algebra in Theorem 2.6.2 in \cite{BlMe04}, for $t \in \Clm(\CA)$ we have 
\begin{align*}
    \|\overline{\varphi}(t)\| = \sup_{a \in \CA_1, b \in \CB_1} \|\overline{\varphi}(t)\varphi(a)b\|
    = \sup_{a \in \CA_1, b \in \CB_1} \|\varphi(ta)b\| = \sup_{a \in \CA_1} \|\varphi(ta)\| = \sup_{a \in \CA_1} \|ta\| = \|t\|, 
\end{align*}
where $\CA_1$ and $\CB_1$ denote the unit balls of $\CA$ and $\CB$, respectively. Hence, $\overline{\varphi}$ is an isometry.   
\end{proof}

\begin{prop}\label{prop:mdiag_eq_diagm}
    If $\CA$ is an operator algebra, then 
    \begin{align*}
        \diag (\Clm(\CA)) = \Clm (\diag(\CA)). 
    \end{align*} 
\end{prop}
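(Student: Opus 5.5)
Both sides of the identity sit inside $\CA^{**}$, and I will prove the two inclusions directly there. Write $\CD = \diag(\CA)$; it is a C$^*$-algebra containing a $\cai$ $\{e_i\}$ for $\CA$. To get a faithful concrete picture, I would represent $\CA$ nondegenerately on $\CH$ (say via the universal representation of a C$^*$-cover). Then $\Clm(\CA)$ is the idealizer \eqref{equ:mul_concrete} of $\CA$ in $\Clb(\CH)$. Since $e_i\to I$ strongly, $\CD$ also acts nondegenerately, so $\Clm(\CD)$ is its idealizer in $\Clb(\CH)$. Here $\diag(\Clm(\CA))$ means $\Clm(\CA)\cap\Clm(\CA)^*$ computed in $\Clb(\CH)$. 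This adjoint is intrinsic, because $\Clm(\CA)$ is a unital operator algebra and its diagonal is independent of the representation.

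For the inclusion $\diag(\Clm(\CA))\subseteq\Clm(\diag(\CA))$, take $T$ with $T,T^*\in\Clm(\CA)$, and let $d\in\CD$. Then $Td\in\CA$. Also $(Td)^*=d^*T^*\in\CA$, because $d^*\in\CA$ and $T^*$ multiplies $\CA$. Hence $Td\in\CA\cap\CA^*=\CD$. By symmetry $dT\in\CD$, so $T\in\Clm(\CD)$.

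For the reverse inclusion, take $T\in\Clm(\CD)$ and $a\in\CA$. The key step is to factor through the approximate identity:
\begin{align*}
Ta=\lim_i T e_i a ,
\end{align*}
where $Te_i\in\CD\subseteq\CA$, so $Te_ia\in\CA$. The limit converges in norm because $e_ia\to a$ in norm, and $\CA$ is norm closed, so $Ta\in\CA$. Similarly $aT=\lim_i a e_i T\in\CA$, so $T\in\Clm(\CA)$. Because $\Clm(\CD)$ is a C$^*$-algebra, $T^*\in\Clm(\CD)$ as well, and the same argument gives $T^*\in\Clm(\CA)$. Thus $T\in\diag(\Clm(\CA))$.

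The main obstacle is bookkeeping rather than analysis. One must check that the abstract $\Clm(\CD)$, as a subset of $\CD^{**}\subseteq\CA^{**}$, matches the concrete idealizer of $\CD$ in the chosen $\Clb(\CH)$. Both are the idealizer in the same ambient algebra, and $\CD$ acts nondegenerately, so \eqref{equ:mul_concrete} applies to each. The other point to confirm is that the adjoints used in forming diagonals agree, which holds because diagonals do not depend on the representation. The blanket assumption that the $\cai$ lies in $\CD$ is exactly what makes the factorization step work.
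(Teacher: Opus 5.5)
Your proof is correct and is essentially the paper's argument. You represent $\CA$ nondegenerately and get $\diag(\Clm(\CA))\subseteq\Clm(\diag(\CA))$ by checking that $Td$, $dT$ and their adjoints lie in $\CA$, then get the reverse inclusion by factoring $Ta=\lim_i (Te_i)a$ and $aT=\lim_i a(e_iT)$ through a $\cai$ in the diagonal. You also make explicit a step the paper leaves tacit: $T^*\in\Clm(\diag(\CA))$ as well, so $T$ lands in the diagonal of $\Clm(\CA)$ and not merely in $\Clm(\CA)$.
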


\begin{proof}
    Without loss of generality, we may assume that $\CA$ is represented as a non-degenerate subalgebra of $\Clb(\CH)$. Let $T \in \diag (\Clm(\CA))$. Since 
\begin{align*}
    \{TA, A^*T^*, AT, T^*A^* \} \subset \CA, \quad \forall A \in \diag(\CA),
\end{align*}
it follows that $T \in \Clm(\diag(\CA))$. Hence, $\diag (\Clm(\CA)) \subseteq \Clm (\diag(\CA))$. Conversely, for $S \in \Clm(\diag(\CA))$, we have
    \begin{align*}
        SA &= \lim_{i} (S E_{i}) A \in \CA, \quad AS = \lim_{i}  A(E_{i} S)\in \CA
    \end{align*}
for every $A \in \CA$, where $\{E_i \}$ is a $\cai$ of $\CA$ contained in $\diag(\CA)$. Therefore, $\diag (\Clm(\CA)) = \Clm (\diag(\CA))$. 
\end{proof}

\begin{prop}\label{prop:diag_ideal_to_ideal_mul}
    Let $\CA$ be a closed ideal of an operator algebra $\CB$. Then $\diag(\CA)$ is an closed ideal in $\diag \left (\Clm(\CB) \right)$. 
\end{prop}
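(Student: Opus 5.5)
We will represent $\CB$ non-degenerately on a Hilbert space $\CH$, so that $\Clm(\CB)$ is the idealizer of $\CB$ in $\Clb(\CH)$ as in \eqref{equ:mul_concrete}. By the blanket assumption, $\CA$ has a $\cai$ $\{E_i\}$ contained in $\diag(\CA)$, and $\CB$ has one contained in $\diag(\CB)$. First we will check that $\CA$ sits inside $\Clm(\CB)$. Since $\CA$ is an ideal of $\CB$, we have $A\CB\subset\CA\subset\CB$ and $\CB A\subset\CB$ for every $A\in\CA$, so $\CA\subset\Clm(\CB)$. It follows that $\diag(\CA)=\CA\cap\CA^*\subset\Clm(\CB)\cap\Clm(\CB)^*=\diag(\Clm(\CB))$. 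Moreover, $\diag(\CA)$ is norm closed and self-adjoint, and it is an algebra because $\CA$ and $\CA^*$ are. So it remains only to prove the ideal property.

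Fix $T\in\diag(\Clm(\CB))$ and $A\in\diag(\CA)$; we must show $TA$ and $AT$ lie in $\diag(\CA)$.

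\textbf{Showing $TA\in\CA$.} Write
\[
TA=\lim_i T E_i A .
\]
Here $TE_i\in T\CB\subset\CB$, and $E_iA$ is a product of elements of $\CA$, so we want to write $TE_iA$ as (element of $\CB$)$\cdot$(element of $\CA$). Since $\CA$ is a two-sided ideal of $\CB$, we have $(TE_i)A\in\CB\CA\subset\CA$. Because $\CA$ is closed, $TA\in\CA$. The same argument gives $AT=\lim_i AE_iT=\lim_i A(E_iT)\in\CA\CB\subset\CA$.

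\textbf{Showing $TA\in\CA^*$.} Note $(TA)^*=A^*T^*$, where $A^*\in\diag(\CA)$ and $T^*\in\diag(\Clm(\CB))$. Applying the previous step to the pair $(T^*,A^*)$ gives $A^*T^*\in\CA$, hence $TA\in\CA^*$. Likewise $(AT)^*=T^*A^*\in\CA$. Thus $TA$ and $AT$ lie in $\CA\cap\CA^*=\diag(\CA)$.

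The only point needing care is the use of the $\cai$ of $\CA$ inside $\diag(\CA)$ to factor through $\CB$, which the limit $TA=\lim_i TE_iA$ handles, since $E_iA\to A$ in norm. Alternatively, one can argue with the $\cai$ of $\CB$ lying in $\diag(\CB)$. We do not expect a real obstacle; the main subtlety is just that $T$ is only a multiplier, not an element of $\CB$, which the factorization resolves.
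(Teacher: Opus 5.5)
Your proof is correct, but it takes a different route from the paper's.

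\textbf{The paper's route.} The paper argues structurally. It first notes that $\diag(\CA)$ is a closed ideal of $\diag(\CB)$. It then uses Proposition~\ref{prop:mdiag_eq_diagm} to identify $\diag(\Clm(\CB))$ with $\Clm(\diag(\CB))$, in which $\diag(\CB)$ is an ideal. It concludes by transitivity of closed ideals in C$^*$-algebras: an ideal of an ideal of a C$^*$-algebra is an ideal (Remark 3.1.2 in Murphy).

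\textbf{Your route.} You compute directly in the concrete idealizer. Your factorization $TA=\lim_i (TE_i)A$, with $TE_i\in\CB$, works for every $T\in\Clm(\CB)$ and $A\in\CA$, not just diagonal ones. So it actually proves the stronger fact that $\Clm(\CB)\CA\subset\CA$ and $\CA\,\Clm(\CB)\subset\CA$, i.e.\ $\CA$ is an ideal of the whole multiplier algebra. The diagonal statement then follows by taking adjoints, which is exactly where you need $T^*\in\Clm(\CB)$.

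\textbf{Comparison.} Your argument is more elementary and self-contained. It needs neither Proposition~\ref{prop:mdiag_eq_diagm}, which relies on the $\cai$ of $\CB$ lying in $\diag(\CB)$, nor the transitivity lemma, whose proof hides the same approximate-unit factorization. For the diagonal statement alone, you could even replace $\{E_i\}$ by any approximate unit of the C$^*$-algebra $\diag(\CA)$, since you only approximate elements $A\in\diag(\CA)$. So no assumption on where the $\cai$ of $\CA$ sits is really needed. The paper's argument is shorter given the machinery already in place, and stays entirely within C$^*$-algebra theory by reusing Proposition~\ref{prop:mdiag_eq_diagm}.
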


\begin{proof}
Without loss of generality, we may assume that $\CB$ is represented as a non-degenerate subalgebra of $\CB(\CH)$. Since
\begin{align*}
    \{AB, BA, A^*B^*, B^*A^*\} \subset \CA, \quad \forall A \in \CA^* \cap \CA, B \in \CB^* \cap \CB,
\end{align*}
    it follows that $\diag(\CA)$ is a closed ideal of $\diag(\CB)$. Note that $\diag(\CB)$ is a closed ideal in $\diag\left (\Clm(\CB) \right) = \Clm\left(\diag(\CB) \right)$ by Proposition \ref{prop:mdiag_eq_diagm}. Hence, $\diag(\CA)$ is a closed ideal in $\diag\left (\Clm(\CB) \right)$ (see Remark 3.1.2 in \cite{GM90}). 
\end{proof}


The next lemma follows immediately from the fact that the multiplier algebra of a discrete operator algebra is contained in its double commutant (see [2.6.5] in \cite{BlMe04}).

\begin{lem}\label{lem:ab_mul}
Let $\CA$ be a MASA in $\Clb(\CH)$, and let $\CD \subset \Clk$ be a MASA generated by a complete family of mutually orthogonal rank-one projections. Then the commutant $(\CA \otimes \CD)'$ is a MASA in $\Clb(\CH \otimes \CK)$ and one has
\begin{align*}
 \Clm(\CA \otimes \CD) = (\CA \otimes \CD)'.   
\end{align*}
\end{lem}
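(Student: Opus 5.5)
We prove the two assertions of Lemma~\ref{lem:ab_mul} in turn: first that $(\CA \otimes \CD)'$ is a MASA, and then that it is the multiplier algebra of $\CA \otimes \CD$.

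\textbf{Step 1: the commutant is a MASA.} Let $\{P_n\}$ be the complete family of mutually orthogonal rank-one projections generating $\CD$. Then $\CD$ is the $c_0$-span of the $P_n$, and its weak closure $\CD''$ is the atomic MASA $\ell^\infty$ in $\Clb(\CK)$. Since the commutant of a set equals the commutant of the von Neumann algebra it generates, we get
\begin{align*}
(\CA \otimes \CD)' = (\CA \,\bar\otimes\, \CD'')' = \CA' \,\bar\otimes\, \CD''' = \CA \,\bar\otimes\, \CD''.
\end{align*}
Here the middle equality is Tomita's commutation theorem for tensor products (\cite{KR97}), and the last uses $\CA' = \CA$ and $\CD''' = \CD' = \CD''$, because both are MASAs. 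A tensor product of two MASAs is abelian, and it equals its own commutant by the same computation. Hence $(\CA \otimes \CD)'$ is a MASA in $\Clb(\CH \otimes \CK)$.

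\textbf{Step 2: setting up the multiplier computation.} Write $\CC = \CA \otimes \CD$. It is a commutative C$^*$-algebra acting non-degenerately on $\CH \otimes \CK$, since the family $\{I \otimes P_n\}$ lies in $\CC$ and its finite sums form an approximate unit converging strongly to $I$. By \eqref{equ:mul_concrete}, $\Clm(\CC)$ is therefore the idealizer of $\CC$ in $\Clb(\CH \otimes \CK)$.

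\textbf{Step 3: $\Clm(\CC) \subseteq \CC'$.} Since $\CC$ is abelian, we follow the standard argument that the multiplier algebra sits inside the double commutant ([2.6.5] of \cite{BlMe04}). Let $T \in \Clm(\CC)$ and $x, y \in \CC$. Then $Tx \in \CC$ commutes with $y$, so
\begin{align*}
(xT - Tx)y = x(Ty) - (Tx)y = (Ty)x - y(Tx) = T(yx) - (yT)x = T(xy) - (yT)x .
\end{align*}
Rather than continue this rearrangement, it is cleaner to use the identity $x(Ty) = (Ty)x = T(yx) = T(xy) = (Tx)y$. This gives $(xT - Tx)y = 0$ for all $y \in \CC$, and by non-degeneracy $xT = Tx$. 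So $\Clm(\CC) \subseteq \CC'$.

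\textbf{Step 4: $\CC' \subseteq \Clm(\CC)$.} This is the main point. Let $T \in \CC' = \CA \,\bar\otimes\, \ell^\infty$. We must show $T(a \otimes d) \in \CA \otimes \CD$ for $a \in \CA$ and $d \in \CD$.
\begin{itemize}
\item By norm density it suffices to take $d = P_n$.
\item Since $T$ commutes with $I \otimes P_n$, it has the form $T = \sum_n T_n \otimes P_n$ (strongly convergent), where each $T_n \in \CA$ because $\CA$ is a MASA and $T_n$ commutes with $\CA$.
\item Then $T(a \otimes P_n) = T_n a \otimes P_n \in \CA \otimes \CD$.
\end{itemize}
Multiplication on the other side is the same, since $T$ commutes with $\CC$. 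Hence $\CC' \subseteq \Clm(\CC)$, and combining with Step 3 gives $\Clm(\CC) = \CC'$.

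This step is exactly where the hypotheses matter: the rank-one atoms $P_n$ let us cut $T$ down to elements of $\CA$ itself, and the maximality of $\CA$ is what places each $T_n$ in $\CA$. For a non-atomic $\CD$ this inclusion would fail, because the commutant would then contain elements that are not multipliers.
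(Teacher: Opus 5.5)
Your proof is correct and takes essentially the paper's route. The paper only cites [2.6.5] of \cite{BlMe04} for $\Clm(\CA\otimes\CD)\subseteq(\CA\otimes\CD)''=(\CA\otimes\CD)'$, which is your Step 3, and leaves implicit both the MASA property and the reverse inclusion, which you supply in Steps 1 and 4 via Tomita's commutation theorem and the rank-one compressions $T_n\otimes P_n$ with $T_n\in\CA$ (the abandoned first chain of equalities in Step 3 is harmless but should be deleted).
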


\begin{lem}\label{lem:center_of_mul}
Let $\CL$ be a CSL on a Hilbert space $\CH$, and let $\CB \subset \Clb(\CK)$ be an approximately unital operator algebra containing $\Clk$. Then 
\begin{align*}
    \diag \left(\Clm(\Alg (\CL) \otimes \CB) \right) ' = \CL'' \otimes I.
\end{align*}
In particular the center of $\diag \left(\Clm(\Alg(\CL) \otimes \CB) \right)$ is $\CL'' \otimes I$.
\end{lem}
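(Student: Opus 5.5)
We compute the diagonal of the multiplier algebra first and then take its commutant. Write $\CA=\Alg(\CL)$ and put everything concretely on $\CH\otimes\CK$. The algebra $\CA\otimes\CB$ acts non-degenerately, because $\CB\supseteq\Clk$. So by \eqref{equ:mul_concrete}, $\Clm(\CA\otimes\CB)$ is the idealizer of $\CA\otimes\CB$ in $\Clb(\CH\otimes\CK)$. Proposition~\ref{prop:mdiag_eq_diagm} then gives $\diag(\Clm(\CA\otimes\CB))=\Clm(\diag(\CA\otimes\CB))$. The aim is to trap this C$^*$-algebra $\CC:=\diag(\Clm(\CA\otimes\CB))$ between two algebras whose commutants we can compute:
\begin{align*}
\CL'\otimes\Clk\;\subseteq\;\CC\;\subseteq\;(\CL''\otimes I)'=\CL'\,\bar\otimes\,\Clb(\CK).
\end{align*}
Granting this, $\CC'\subseteq(\CL'\otimes\Clk)'=\CL''\otimes I$, by the commutation theorem for tensor products together with $\Clk'=\Cb I$. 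Also $\CC'\supseteq(\CL''\otimes I)''=\CL''\otimes I$. Together these give $\CC'=\CL''\otimes I$.

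For the lower inclusion: since $\CL'=\diag(\CA)\subseteq\CA$, $\Clk\subseteq\CB$, and $\Clk$ is selfadjoint, we have $\CL'\otimes\Clk\subseteq\diag(\CA\otimes\CB)$. That $\diag(\CA\otimes\CB)\subseteq\diag(\Clm(\CA\otimes\CB))$ is clear: $\CA\otimes\CB$ is an ideal of its multiplier algebra, so an element lying in it together with its adjoint is in $\Clm(\CA\otimes\CB)$ together with its adjoint.

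The upper inclusion is the main step: every $T\in\CC$ commutes with $P\otimes I$ for all $P\in\CL$. Since $T$ and $T^*$ both multiply $\CA\otimes\CB$, it suffices to show that any $T$ with $T^*(\CA\otimes\CB)\subseteq\CA\otimes\CB$ and $T(\CA\otimes\CB)\subseteq\CA\otimes\CB$ leaves each range $\mathrm{ran}(P\otimes I)$ invariant. We test against elements of $\CA\otimes\CB$ that are close to the identity. Take an increasing sequence $Q_n$ of finite-rank projections on $\CK$ with $Q_n\to I$ strongly, and observe that $I\otimes Q_n\in\CA\otimes\CB$. Then $T(I\otimes Q_n)\in\CA\otimes\CB\subseteq\Alg(\CL\otimes I)$, because every element of the minimal tensor product leaves $\mathrm{ran}(P\otimes I)$ invariant. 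Hence
\begin{align*}
(P^\perp\otimes I)\,T\,(I\otimes Q_n)(P\otimes I)=0 .
\end{align*}
Letting $n\to\infty$ strongly gives $(P^\perp\otimes I)T(P\otimes I)=0$. The same argument applied to $T^*$ gives $(P^\perp\otimes I)T^*(P\otimes I)=0$, so $P\otimes I$ reduces $T$. Thus $\CC\subseteq(\CL\otimes I)'=(\CL''\otimes I)'$.

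The one point I expect to need care is the identification $(\CL''\otimes I)'=\CL'\,\bar\otimes\,\Clb(\CK)$ and its dual $(\CL'\otimes\Clk)'=\CL''\otimes I$. Both are standard consequences of Tomita's commutant theorem; for the second one only the weak closure $\CL'\bar\otimes\Clb(\CK)$ of $\CL'\otimes\Clk$ matters.

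For the center: $\CC\cap\CC'=\CC\cap(\CL''\otimes I)$. Since $\CL$ is a CSL, $\CL''$ is abelian, so $\CL''\subseteq\CL'$. The multiplicity-free assumption is not needed here, only the commutativity of $\CL$. It remains to show $\CL''\otimes I\subseteq\CC$. For $X\in\CL''$, the operator $X\otimes I$ multiplies $\CA\otimes\CB$, because $X\in\CL'\subseteq\CA$ gives $X\CA\subseteq\CA$ and $\CA X\subseteq\CA$, and this passes to the minimal tensor product. Its adjoint $X^*\otimes I$ does the same since $X^*\in\CL''$. So $X\otimes I\in\CC$, and the center of $\CC$ is $\CL''\otimes I$.
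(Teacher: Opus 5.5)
Your proof is correct and follows the paper's route. The paper likewise traps $\diag(\Clm(\Alg(\CL)\otimes\CB))$ between $\CL'\otimes\diag(\CB)$ (your smaller $\CL'\otimes\Clk$ suffices) and $(\CL'\otimes\Clb(\CK))^{-\wot}=(\CL''\otimes I)'$, then takes commutants. Your $I\otimes Q_n$ argument for the upper inclusion and your check that $\CL''\otimes I$ lies in the diagonal (needed for the center statement) supply two steps the paper leaves implicit.
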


\begin{proof}
First, note that $(I \otimes \diag(\CB))' = \Clb(\CH) \otimes I$, since $\Clk \subset \diag(\CB)$. From the inclusion
\begin{align*}
    \CL' \otimes \diag(\CB) \subset \diag\left(\Clm(\Alg(\CL) \otimes \CB) \right) \subset \left (\CL' \otimes \Clb(\CK) \right)^{-\wot},
\end{align*}
it follows that $\CL'' \otimes I$ is the commutant of $\diag \left (\Clm(\Alg(\CL) \otimes \CB) \right)$, and hence also its center.
\end{proof}

In what follows, $\CL_i$ denotes a CSL on a Hilbert space $\CH_i$, while $\CB_i \subset \Clb(\CK)$ denotes an operator algebra that contains the compact operators $\Clk$, $i = 1, 2$.

\begin{lem}\label{lem:non_ret}
Suppose $\varphi: \Alg(\CL_1) \otimes \CB_1 \to \Alg(\CL_2) \otimes \CB_2$ is an isometric isomorphism. Then the restriction of $\varphi$ to $\CL_1' \otimes \Clk$ is non-degenerate.
\end{lem}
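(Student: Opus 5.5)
Throughout, \emph{non-degenerate} is taken in the Hilbert-space sense: $\overline{\varphi(\CL_1'\otimes\Clk)(\CH_2\otimes\CK)}=\CH_2\otimes\CK$. This is what the statement needs for Proposition~\ref{prop:ext_to_double_dual} (with $\CB=\Clk$). A norm "strict" version $\varphi(e_n)y\to y$ cannot be expected for all $y$, because $\varphi(e_n)\varphi(x)=\varphi(e_nx)$ and $e_nx\not\to x$ when $x\notin\Alg(\CL_1)\otimes\Clk$. So the goal is to show that the support projection of $\varphi(\CL_1'\otimes\Clk)$ is $I$.

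\textbf{Setup.} Let $\{P_n\}$ be an increasing sequence of finite-rank projections with $P_n\to I$ strongly, and set $e_n=I\otimes P_n\in\CL_1'\otimes\Clk$. Then $\{e_n\}$ is a $\cai$ both for $\CL_1'\otimes\Clk$ and for the closed ideal $\CJ_1:=\Alg(\CL_1)\otimes\Clk$ of $\Alg(\CL_1)\otimes\CB_1$; that $\CJ_1$ is an ideal follows from $\Clk\subset\CB_1$ being an ideal. Moreover $\CJ_1$ is essential in $\Alg(\CL_1)\otimes\CB_1$: if $x\CJ_1=0$, then $xe_n=0$ for all $n$, and letting $n\to\infty$ strongly gives $x=0$.

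An isometric isomorphism maps hermitian elements to hermitian elements, so $\varphi$ restricts to a $*$-isomorphism of diagonals. Put $\CJ=\varphi(\CJ_1)$, a closed ideal of $\Alg(\CL_2)\otimes\CB_2$, and $\CD=\varphi(\CL_1'\otimes\Clk)$, which is contained in $\diag(\CJ)$. The projections $\varphi(e_n)$ increase, and their strong limit $p$ is the support projection of $\CD$. Since $\varphi(e_n)$ is a $\cai$ for $\CJ$, we also get $pj=j$ for all $j\in\CJ$. It remains to show $p=I$.

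\textbf{Locating $p$.} This is the main step. Since $\diag(\CJ)=\varphi(\diag\CJ_1)$ has the same $\cai$, its support is also $p$. By Proposition~\ref{prop:diag_ideal_to_ideal_mul}, $\diag(\CJ)$ is an ideal in $\diag(\Clm(\Alg(\CL_2)\otimes\CB_2))$. The support projection of an ideal commutes with the ambient algebra: for unitaries $u$ in the ambient algebra, $u\,\diag(\CJ)\,u^*=\diag(\CJ)$. Hence $p$ lies in the commutant of $\diag(\Clm(\Alg(\CL_2)\otimes\CB_2))$, and Lemma~\ref{lem:center_of_mul} gives $p=Q\otimes I$ with $Q\in\CL_2''\subset\CL_2'$. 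In particular $I-p$ is a multiplier of $\Alg(\CL_2)\otimes\CB_2$.

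\textbf{Conclusion.} Take any $y\in\Alg(\CL_2)\otimes\CB_2$. Then $(I-p)y$ lies in the algebra, and
\begin{align*}
(I-p)y\,\CJ\subset (I-p)\CJ=0,
\end{align*}
using $y\CJ\subset\CJ$ and $p\CJ=\CJ$ pointwise. Applying $\varphi^{-1}$ gives $\varphi^{-1}\bigl((I-p)y\bigr)\,\CJ_1=0$, so essentiality yields $(I-p)y=0$. Since $\Alg(\CL_2)\otimes\CB_2$ contains a $\cai$ and acts non-degenerately, this forces $I-p=0$. Therefore $\varphi(e_n)\to I$ strongly, and the restriction of $\varphi$ to $\CL_1'\otimes\Clk$ is non-degenerate.
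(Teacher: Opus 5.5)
Your proof is correct. Its first half matches the paper's: both recognize the projection onto $\overline{\varphi(\CL_1'\otimes\Clk)(\CH_2\otimes\CK)}$ as the support of an ideal in $\diag(\Clm(\Alg(\CL_2)\otimes\CB_2))$ (Proposition~\ref{prop:diag_ideal_to_ideal_mul}), and both then place it in $\CL_2''\otimes I$ via Lemma~\ref{lem:center_of_mul}.

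The two arguments differ only in the closing step.

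The paper pulls $Q$ back through the unital multiplier extension $\overline{\varphi}$. It uses that $\overline{\varphi}$ carries the center $\CL_1''\otimes I$ onto $\CL_2''\otimes I$, which gives a projection $P\in\CL_1''$ with $\varphi\bigl((\CL_1'(I-P))\otimes\Clk\bigr)=0$. Injectivity of $\varphi$ then forces $P=I$.

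You never leave the codomain side. Since $I-Q\in\CL_2'\subset\Alg(\CL_2)$, the element $(I-p)y$ lies in $\Alg(\CL_2)\otimes\CB_2$. It annihilates $\varphi(\Alg(\CL_1)\otimes\Clk)$. Because $\Alg(\CL_1)\otimes\Clk$ is essential in $\Alg(\CL_1)\otimes\CB_1$, which you verify with the approximate unit $I\otimes P_n$, this element must vanish.

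Your route avoids $\overline{\varphi}$ and the center correspondence altogether. It also gives the concrete conclusion $\varphi(I\otimes P_n)\to I$ strongly, which is the form in which the lemma is used in Lemma~\ref{lem:unitary_on_diag}.

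The paper's route is shorter given the multiplier machinery it has already set up. The center correspondence it relies on is needed again in Lemma~\ref{lem:unitary_on_diag} in any case.
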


\begin{proof}
    Let $\overline{\varphi}: \Clm (\Alg(\CL_1) \otimes \CB_1) \to \Clm (\Alg(\CL_2) \otimes \CB_2)$ denote the unital isometric isomorphism extending $\varphi$. Then $\overline{\varphi}$ restricts to a $*$-isomorphism between the diagonals of $\Clm(\Alg(\CL_1) \otimes \CB_1)$ and $\Clm(\Alg(\CL_2) \otimes \CB_2)$. Note that $\Alg(\CL_1) \otimes \Clk$ is an closed ideal in $\Alg(\CL_1) \otimes \CB_1$. By Proposition~\ref{prop:diag_ideal_to_ideal_mul},
\begin{align*}
    \CL_1' \otimes \Clk = \diag(\Alg(\CL_1) \otimes \Clk) 
\end{align*}
    is a closed ideal in $\diag \left ( \Clm (\Alg(\CL_1) \otimes \CB) \right)$ (see also Lemma 2.2 in \cite{KaKaLi23}). Thus, $\overline{\varphi(\CL_1' \otimes \Clk) (\CH_2 \otimes \CK)}$ is an invariant subspace of
\begin{align*}
    \diag \left ( \Clm (\Alg(\CL_2) \otimes \CB_2) \right) = \overline{\varphi}(\diag \left (\Clm (\Alg(\CL_1) \otimes \CB_1) \right)).
\end{align*}
Let $Q$ be the projection onto the subspace $\overline{\varphi(\CL_1' \otimes \Clk) (\CH_2 \otimes \CK)}$. By Lemma \ref{lem:center_of_mul}, we have $Q \in \CL_2'' \otimes I$. Since $\overline{\varphi}$ induces a $*$-isomorphism between the centers of $\diag \left (\Clm(\Alg(\CL_i) \otimes \CB_i) \right)$, $i = 1, 2$, there exists a projection $P \in \CL_1''$ such that $Q = \overline{\varphi}(P \otimes I)$. Then 
\begin{align*}
    \varphi \left( (\CL_1'(I-P)) \otimes \Clk \right) = \varphi(\CL_1' \otimes \Clk) \overline{\varphi}\left ( (I-P) \otimes I \right) = 0. 
\end{align*}
Since $\varphi$ is injective, we conclude that $P = I$, and hence the restriction of $\varphi$ to $\CL_1' \otimes \Clk$ is non-degenerate.
\end{proof}

\begin{lem}\label{lem:unitary_on_diag}
Suppose $\CL_1$ and $\CL_2$ are multiplicity free. Let $\CD \subset \Clk$ be a MASA generated by a complete family of mutually orthogonal rank-one projections. If 
\begin{align*}
 \varphi: \Alg(\CL_1) \otimes \CB_1 \to \Alg(\CL_2) \otimes \CB_2   
\end{align*}
is an isometric isomorphism, then there exists a unitary $U: \CH_2 \otimes \CK \to \CH_1 \otimes \CK$ such that 
\begin{align*}
    \overline{\varphi}(A) = U^* AU, \quad \forall A \in \Clm(\CL_1' \otimes \CD) \cap \diag \left (\Clm(\Alg(\CL_1) \otimes \CB_1) \right), 
\end{align*}
where $\overline{\varphi}: \Clm(\Alg(\CL_1) \otimes \CB_1) \to \Clm(\Alg(\CL_2) \otimes \CB_2)$ is the unique isometric extension of $\varphi$.
\end{lem}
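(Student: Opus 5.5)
Set $\CA_i := \CL_i' \otimes \CD$ for $i=1,2$. Since $\CL_i$ is multiplicity free, $\CL_i' = \CL_i''$ is a MASA. Lemma~\ref{lem:ab_mul} then tells us that $\CM_i := \Clm(\CA_i) = (\CL_i' \otimes \CD)'$ is a MASA in $\Clb(\CH_i \otimes \CK)$. We set
\[
\CN_i := \CM_i \cap \diag\left(\Clm(\Alg(\CL_i)\otimes \CB_i)\right).
\]
The plan is to show that $\overline{\varphi}$ carries $\CN_1$ onto something unitarily implemented, and to build $U$ from the action of $\overline{\varphi}$ on the minimal projections $P\otimes E_n$, where $\{E_n\}$ are the rank-one atoms of $\CD$.

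\textbf{Step 1.} Note that $\CA_1 \subset \CL_1'\otimes\Clk$, so $\CA_1 \subset \CN_1$. Also $\CM_1\cap \diag(\Clm) \supseteq \CL_1''\otimes I$, by Lemma~\ref{lem:center_of_mul}. Because $\overline{\varphi}$ restricts to a $*$-isomorphism of the diagonals of the multiplier algebras, $\overline{\varphi}(\CN_1)$ is an abelian C$^*$-subalgebra of $\diag(\Clm(\Alg(\CL_2)\otimes\CB_2))$. The first target is to show that $\overline{\varphi}(\CN_1)'' $ is a MASA, or at least that $\overline{\varphi}(I\otimes E_n)$ is a projection of a controlled form.

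Lemma~\ref{lem:non_ret} says $\varphi|_{\CL_1'\otimes\Clk}$ is non-degenerate. Hence $\overline{\varphi}$ restricted to $\CL_1'\otimes\Clk$ is a non-degenerate $*$-isomorphism from $\CL_1'\otimes\Clk$ into $\Clb(\CH_2\otimes\CK)$. Its image is an ideal in the diagonal, by Proposition~\ref{prop:diag_ideal_to_ideal_mul} applied on side 2, and that ideal coincides with $\CL_2'\otimes\Clk$ by symmetry, since $\varphi^{-1}$ gives the reverse inclusion of ideals.

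\textbf{Step 2.} So $\psi := \overline{\varphi}|$ is a $*$-isomorphism $\CL_1'\otimes\Clk \to \CL_2'\otimes \Clk$, and it extends to multipliers, in particular to $\Clb(\CH_1)\supseteq$-free pieces $\CL_1'\otimes\Clb(\CK)$. Matrix units give the unitary. Let $\{E_{mn}\}$ be matrix units in $\Clk$ with $E_{nn}=E_n$. Then $\{\psi(I\otimes E_{mn})\}$ are partial isometries in $\Clm(\CL_2'\otimes\Clk)=\CL_2'\otimes\Clb(\CK)$ (here $\CL_2'$ is a von Neumann algebra, so this identification holds). They form a system of matrix units with $\sum_n \psi(I\otimes E_n)=I$, by non-degeneracy.

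Set $F:=\psi(I\otimes E_1)$. The corner $F(\CL_2'\otimes\Clk)F \cong \psi(\CL_1'\otimes E_1)\cong \CL_1'$ is abelian. Define
\[
U^*(\xi\otimes e_n) := \psi(I\otimes E_{n1})\, W\xi ,
\]
where $W:\CH_1\to F(\CH_2\otimes\CK)$ is a unitary implementing the isomorphism $\CL_1'\otimes E_1 \to F(\cdot)F$.

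\textbf{Step 3 (main obstacle).} We need $W$ to exist, i.e.\ the $*$-isomorphism $\CL_1' \to \psi(\CL_1'\otimes E_1)$ must be spatial. Here both algebras are MASAs acting with multiplicity one: $\CL_1'$ is a MASA on $\CH_1$. For the image, $\psi(\CL_1'\otimes E_1)$ is a maximal abelian ideal-corner in $F(\CL_2'\otimes\Clb(\CK))F$, and by Lemma~\ref{lem:center_of_mul} the center is $\CL_2''\otimes I$. So this corner is abelian and commutes with its commutant restricted to $F$, which forces multiplicity freeness on $F(\CH_2\otimes\CK)$.

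The isomorphism is normal, because it is the restriction of a $w^*$-continuous map. Normal $*$-isomorphisms between multiplicity-free abelian von Neumann algebras are unitarily implemented, since each has a cyclic separating vector on a separable space. With this $W$, $U$ is unitary and $\overline{\varphi}(A)=U^*AU$ on $\CL_1'\otimes\Clk$, hence on its weak closure intersected with the multipliers. By normality of $\overline{\varphi}$, both sides are $w^*$-continuous, so the identity extends to $\CN_1\subset (\CL_1'\otimes\CD)''$.

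The delicate points are the multiplicity-free argument for the corner, and checking that $\overline{\varphi}$ agrees with the normal extension of $\psi$ on $\CN_1$. For the latter I would use uniqueness in Proposition~\ref{prop:ext_to_mul}: for $A\in\CN_1$ we have $A\cdot(\CL_1'\otimes\Clk)\subset \CL_1'\otimes\Clk$, and then $\overline{\varphi}(A)\psi(x)=\psi(Ax)=U^*AU\,\psi(x)$, combined with non-degeneracy.
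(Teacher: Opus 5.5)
There is a genuine gap at the end of Step~1, and the existence of $W$ in Steps~2--3 depends on it. You assert that $\overline{\varphi}(\CL_1'\otimes\Clk)=\CL_2'\otimes\Clk$ ``by symmetry''. Proposition~\ref{prop:diag_ideal_to_ideal_mul} only tells you that $\overline{\varphi}(\CL_1'\otimes\Clk)$ is \emph{some} closed ideal of $\diag(\Clm(\Alg(\CL_2)\otimes\CB_2))$, and that algebra has many ideals besides $\CL_2'\otimes\Clk$. A symmetry argument works only after one inclusion has been proved for every such $\varphi$, and you prove neither inclusion. (The equality does hold, but it needs an actual argument, for example that every abelian projection of $\CL_2'\otimes\diag(\CB_2)$ lies in $\CL_2'\otimes\Clk$.) You use this equality to identify $F(\CL_2'\otimes\Clk)F$ with $\psi(\CL_1'\otimes E_1)$, and hence to conclude that $F$ is abelian. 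Step~3 (``maximal abelian ideal-corner'', ``commutes with its commutant restricted to $F$'') does not independently prove what you need: that $\psi(\CL_1'\otimes E_1)$ is a von Neumann algebra acting as a MASA on $F(\CH_2\otimes\CK)$. Without that, the spatial implementation that produces $W$ is not available.

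The missing step is the paper's key computation, carried out in the diagonal of the multiplier algebra, where $\overline{\varphi}$ is a $*$-isomorphism \emph{onto}. Since $\diag(\Clm(\Alg(\CL_1)\otimes\CB_1))\subseteq(\CL_1'\otimes\Clb(\CK))^{-\wot}$, you get $F\,\diag(\Clm(\Alg(\CL_2)\otimes\CB_2))\,F=\overline{\varphi}(\CL_1'\otimes E_1)=\overline{\varphi}(\CL_1'\otimes I)F=(\CL_2'\otimes I)F$. The last equality holds because $\overline{\varphi}$ carries the center $\CL_1''\otimes I$ onto $\CL_2''\otimes I$ (Lemma~\ref{lem:center_of_mul}). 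Since $\CL_2'\otimes\Clk$ lies in that diagonal and is WOT-dense in $(\CL_2'\otimes\Clb(\CK))^{-\wot}$, $F$ is abelian there. Hence the commutant of $(\CL_2'\otimes I)F$ on $F(\CH_2\otimes\CK)$ is $F(\CL_2'\otimes\Clb(\CK))^{-\wot}F=(\CL_2'\otimes I)F$, which is a MASA. $W$ then exists because MASAs are standard; no separability is needed, and normality is automatic for $*$-isomorphisms of von Neumann algebras.

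With this repair your route is a valid alternative to the paper's. The paper extends $\varphi|_{\CL_1'\otimes\CD}$ to the MASA $\Clm(\CL_1'\otimes\CD)=(\CL_1'\otimes\CD)'$, shows its image is a MASA by writing each $T$ in the commutant as $\sum_i\phi(A_i\otimes E_i)$, and applies the implementation theorem once, globally. You apply it on a single corner and assemble $U$ from the matrix units $\psi(I\otimes E_{n1})$. Your closing argument ($\overline{\varphi}(A)\psi(x)=\psi(Ax)=U^*AU\,\psi(x)$ plus non-degeneracy from Lemma~\ref{lem:non_ret}) is correct. It should replace the appeal to ``normality of $\overline{\varphi}$'', which is not established for the WOT on the concrete multiplier algebras. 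It even gives $\overline{\varphi}(A)=U^*AU$ for every $A\in\diag(\Clm(\Alg(\CL_1)\otimes\CB_1))$, slightly more than the lemma asks. A minor point: $\Clm(\CL_2'\otimes\Clk)\neq\CL_2'\otimes\Clb(\CK)$ in general, but you only need the matrix units to lie in $(\CL_2'\otimes\Clb(\CK))^{-\wot}$.
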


\begin{proof}
We use $\CA$ to denote the C$^*$-algebra 
\begin{align*}
    \Clm(\CL_1' \otimes \CD) \cap \diag \left ( \Clm(\Alg(\CL_1) \otimes \CB_1) \right ).
\end{align*}
By Proposition~\ref{lem:non_ret}, the restriction of $\varphi$ to $\CL_1' \otimes \CD$ is non-degenerate, since $\CL_1' \otimes \CD$ contains an approximate unit of $\CL_1' \otimes \Clk$. Let 
    \begin{align*}
        \phi: \Clm(\CL_1' \otimes \CD) \to \Clb(\CH_2 \otimes \CK)   
    \end{align*}
    denote the unique extension of $\varphi|_{\CL_1' \otimes \CD}$. Then $\phi$ is an injective $*$-homomorphism agreeing with $\overline{\varphi}$ on $\CA$, as $\CL_1' \otimes \CD$ is a closed ideal in $\CA$ (see Proposition~\ref{prop:ext_to_mul} or Proposition 2.1 in \cite{EL95}). In particular, by Lemma~\ref{lem:center_of_mul}, 
\begin{align*}
    \phi(\CL_1' \otimes I) = \overline{\varphi}(\CL_1' \otimes I) = \CL_2' \otimes I, 
\end{align*}
since $\overline{\varphi}$ induces a $*$-isomrophsim between the centers of $\diag \left ( \Clm(\Alg(\CL_1) \otimes \CB_1) \right )$ and $\diag \left ( \Clm(\Alg(\CL_2) \otimes \CB_2) \right )$.

If we can show that $\phi(\Clm(\CL_1' \otimes \CD))$ is a MASA in $\Clb(\CH_2 \otimes \CK)$, then the lemma follows from Corollary 10.15 in \cite{SZ19}, since every MASA is standard. We now proceed to show that $\phi(\Clm(\CL_1' \otimes \CD))$ indeed forms a MASA in $\Clb(\CH_2 \otimes \CK)$. Let $\{E_i\}_{i=1}^{\infty}$ be the family of rank-one projections generates $\CD$. Note that
\begin{align*}
    \CL_i' \otimes \Clk  \subset \diag \left ( \Clm(\Alg(\CL_i) \otimes \CB_i) \right ) \subset \left ( \CL_i' \otimes \Clb(\CK) \right )^{-\wot}.
\end{align*}
Hence, 
\begin{align*}
    &\phi(I \otimes E_i) \diag \left ( \Clm(\Alg(\CL_2) \otimes \CB_2) \right ) \phi(I \otimes E_i)\\
    =& \overline{\varphi}\left ( (I \otimes E_i) \diag \left ( \Clm(\Alg(\CL_1) \otimes \CB_1) \right ) (I \otimes E_i) \right ) \\
    =& \overline{\varphi}\left ( \CL_1' \otimes E_i \right ) = (\CL_2' \otimes I) \phi(I \otimes E_i).
\end{align*}
Since $\CL_2' \otimes \Clk$ is WOT-dense in $\left ( \CL_2' \otimes \Clb(\CK) \right )^{-\wot}$ and $(\CL_2' \otimes I) \varphi(I \otimes E_i)$ is a von Neumann algebra (see, for example, Proposition 5.5.6 in \cite{KR97}), we have 
\begin{align*}
    \phi(I \otimes E_i) \left (\CL_2' \otimes \Clb(\CK) \right )^{-\wot} \phi(I \otimes E_i) = (\CL_2' \otimes I) \phi(I \otimes E_i),
\end{align*}
i.e., $\phi(I \otimes E_i)$ is an abelian projection in $\left ( \CL_2' \otimes \Clb(\CK) \right )^{-\wot}$. Then, for every $T \in \phi(\Clm(\CL_1' \otimes \CD))' \subset \left (\CL_2' \otimes \Clb(\CK) \right )^{-\wot}$, there exists an operator $A_i \in \CL_1'$ such that
    \begin{align*}
        T \phi(I \otimes E_i) =  \phi(A_i \otimes E_i).
    \end{align*}
By Lemma~\ref{lem:ab_mul}, $\sum_{i=1} A_i \otimes E_i \in \Clm(\CL_1' \otimes \CD)$. Then
\begin{align*}
    T = \phi \left (\sum_{i=1} A_i \otimes E_i \right) \in \phi(\Clm(\CL_1' \otimes \CD)), 
\end{align*}
since $\sum_{i=1}^n \phi(I \otimes E_i)$ converges to the identity operator in $\Clb(\CH_2 \otimes \CK)$ in the SOT. Therefore, $\phi(\Clm(\CL_1' \otimes \CD))$ is a MASA in $\CB(\CH_2 \otimes \CK)$. 
\end{proof}

\begin{thm}\label{thm:stabl_to_unitary}
Suppose $\CL_1$ and $\CL_2$ are multiplicity free. If $\Alg(\CL_1) \otimes \CB_1$ and $\Alg(\CL_2) \otimes \CB_2$ are isometrically isomorphic, then $\Alg(\CL_1)$ and $\Alg(\CL_2)$ are unitarily equivalent.
\end{thm}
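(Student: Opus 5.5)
The plan is to use Lemma~\ref{lem:unitary_on_diag} to obtain a spatially implemented $*$-isomorphism $\theta\colon \CL_1''\to\CL_2''$, show that $\theta$ carries $\CL_1$ onto $\CL_2$, and then implement $\theta$ by a unitary between $\CH_1$ and $\CH_2$.

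\emph{Step 1: the map $\theta$.} Since $\CL_i$ is multiplicity free, $\CL_i''$ is a MASA and so $\CL_i'=\CL_i''$. In particular $\CL_1''\otimes I$ lies in $\Clm(\CL_1'\otimes\CD)$: it multiplies $\CL_1'\otimes\CD$ into itself. It also lies in the center of $\diag(\Clm(\Alg(\CL_1)\otimes\CB_1))$ by Lemma~\ref{lem:center_of_mul}. Hence Lemma~\ref{lem:unitary_on_diag} gives a unitary $U$ with $\overline{\varphi}(A\otimes I)=U^*(A\otimes I)U$ for all $A\in\CL_1''$. As in the proof of Lemma~\ref{lem:unitary_on_diag}, $\overline{\varphi}$ maps the center $\CL_1''\otimes I$ onto the center $\CL_2''\otimes I$. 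So there is a $*$-isomorphism $\theta\colon\CL_1''\to\CL_2''$ with $\overline{\varphi}(A\otimes I)=\theta(A)\otimes I$. Because $\theta\otimes\mathrm{id}$ is implemented by $U$, it is normal, and therefore so is $\theta$.

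\emph{Step 2: $\theta(\CL_1)=\CL_2$.} For a projection $P\in\CL_1''$, I claim that $P\in\CL_1$ if and only if $(I-P\otimes I)X(P\otimes I)=0$ for all $X\in\Alg(\CL_1)\otimes\CB_1$.
\begin{itemize}
\item[$(\Rightarrow)$] This is clear.
\item[$(\Leftarrow)$] Take $X=T\otimes K$ with $K\in\Clk\subset\CB_1$ nonzero. The condition gives $(I-P)TP=0$ for all $T\in\Alg(\CL_1)$, so $P\in\Lat\Alg(\CL_1)=\CL_1$ by Arveson's reflexivity of CSLs.
\end{itemize}
Now apply the homomorphism $\overline{\varphi}$ on multipliers. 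Using $\overline{\varphi}(I)=I$ and $\varphi(\Alg(\CL_1)\otimes\CB_1)=\Alg(\CL_2)\otimes\CB_2$, the criterion for $P$ becomes the same criterion for $\theta(P)$ relative to $\Alg(\CL_2)\otimes\CB_2$. Running the same argument for $\varphi^{-1}$ gives the reverse inclusion, so $\theta$ restricts to a lattice isomorphism $\CL_1\to\CL_2$.

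\emph{Step 3: implementing $\theta$ on $\CH_i$.} I would construct a unitary $W\colon\CH_1\to\CH_2$ with $\theta(A)=WAW^*$. Then $W\Alg(\CL_1)W^*=\Alg(\theta(\CL_1))=\Alg(\CL_2)$, which proves the theorem.

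This is the step I expect to be the main obstacle, since $U$ acts on $\CH\otimes\CK$ and not on $\CH_i$. The route I would take is that a normal $*$-isomorphism between MASAs is spatial. Each MASA $\CL_i''$ has a cyclic and separating vector and is in standard form, so $\theta$ is implemented by a unitary, for instance via Corollary~10.15 in \cite{SZ19}, used exactly as in Lemma~\ref{lem:unitary_on_diag}.

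If separability of $\CH_i$ is not assumed, I would instead argue with multiplicity theory. $U$ intertwines $\CL_1''\otimes I$ with $\CL_2''\otimes I$ through $\theta\otimes\mathrm{id}$. Uniform-multiplicity decompositions can be divided by the multiplicity of $\CK$, so $U$ shows that $\theta$ preserves the multiplicity-one structure. Hence $\theta$ is spatially implemented on $\CH_1\to\CH_2$.
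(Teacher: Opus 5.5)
Your proof is correct, but its middle step takes a different route from the paper's. The paper conjugates $\overline{\varphi}$ by the unitary $U$ of Lemma~\ref{lem:unitary_on_diag} and compares the lattices $U(\CL_2\otimes I)U^*$ and $\CL_1\otimes I$ spatially. For this it uses that $\Clm(\Alg(\CL_i)\otimes\CB_i)$ is WOT-dense in $\Alg(\CL_i\otimes I)=(\Alg(\CL_i)\otimes\Clb(\CK))^{-\wot}$, together with reflexivity of the amplified lattices, and only then descends to $\CH_i$ via Corollary~10.15 of \cite{SZ19}.

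You instead read off $\theta$ from the centers (Lemma~\ref{lem:center_of_mul}). You then verify $P\in\CL_1\iff\theta(P)\in\CL_2$ algebraically: you push $(I-P\otimes I)X(P\otimes I)=0$ through the unital homomorphism $\overline{\varphi}$ and test on elementary tensors $T\otimes K$ against Arveson's $\Lat\Alg(\CL_i)=\CL_i$. This sidesteps the tensor-product description of $\Alg(\CL_i\otimes I)$, the WOT-density claim, and the preliminary replacement of $\CL_i$ by $\Lat\Alg(\CL_i)$.

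Notice that $U$ then plays no essential role in your argument. A $*$-isomorphism between von Neumann algebras is automatically normal, and a $*$-isomorphism between MASAs is spatial because MASAs are standard. So Lemma~\ref{lem:unitary_on_diag}, the most technical step of the section, could be bypassed entirely. Your proof needs only Proposition~\ref{prop:ext_to_mul}, Lemma~\ref{lem:center_of_mul}, Arveson's reflexivity theorem and uniqueness of standard form. What the paper's route buys in exchange is a unitary on $\CH\otimes\CK$ implementing $\overline{\varphi}$ on a large part of the diagonal, which is more than the theorem needs.

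One correction to Step~3. Without separability a MASA need not have a cyclic vector, but it is always in standard form, and that is all \cite[Corollary~10.15]{SZ19} requires. So your multiplicity-theory fallback is unnecessary. As phrased it would not work anyway: multiplicities cannot be divided back by $\dim\CK$, since for example $1\cdot\aleph_0=\aleph_0\cdot\aleph_0$. What makes $\theta$ spatial is simply that both $\CL_i''$ are MASAs, not anything transported by $U$.
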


\begin{proof}
Note that $\Lat\Alg(\CL_i)$ is a reflexive lattice contained in $\CL_i'$. Therefore, upon replacing $\CL_i$ with $\Lat\Alg(\CL_i) (\subset \CL_i')$, we may, without loss of generality, assume that $\CL_i$ is reflexive, for $i = 1, 2$.

Suppose $\varphi: \Alg(\CL_1) \otimes \CB_1 \to \Alg(\CL_2) \otimes \CB_2$ is an isometric isomorphism, and let
\begin{align*}
    \overline{\varphi}: \Clm(\Alg(\CL_1) \otimes \CB_1) \to \Clm(\Alg(\CL_2) \otimes \CB_2)
\end{align*}
denote its isometric extentsion. Let $\CD \subset \Clk$ be a MASA generated by a maximal family of mutually orthogonal rank-one projections. By Lemma~\ref{lem:unitary_on_diag}, there exists a unitary $U: \CH_2 \otimes \CK \to \CH_1 \otimes \CK$ such that 
\begin{align*}
    \overline{\varphi}(A) = U^* AU, \quad \forall A \in \Clm(\CL_1' \otimes \CD) \cap \diag \left (\Clm(\Alg(\CL_1) \otimes \CB_1) \right). 
\end{align*}
Then the composition
\begin{align*}
    \Phi := \left ( \Clm( \Alg(\CL_1) \otimes \CB_1) \xrightarrow{ \overline{\varphi}} \Clm( \Alg(\CL_2) \otimes \CB_2) \xrightarrow{U (-) U^*} \Clm( U (\Alg(\CL_2) \otimes \CB_2)U^*) \right) 
\end{align*}
is an isometric isomorphism such that
\begin{align*}
    \Phi(A) = A, \quad \forall A \in \Clm(\CL_1' \otimes \CD) \cap \diag \left (\Clm(\Alg(\CL_1) \otimes \CB_1) \right).  
\end{align*}

Since $\Clm(\Alg(\CL_i) \otimes \CB_i)$ is $\Wot$-dense in $\Alg(\CL_i \otimes I) = \left (\Alg(\CL_i) \otimes \Clb(\CK) \right)^{-\wot}$, we have
\begin{align*}
    [I - (P \otimes I)] \Phi(T) (P\otimes I) = \Phi( [I - (P \otimes I)] T (P \otimes I)) = 0
\end{align*}
for every $T \in \Clm(\Alg(\CL_1) \otimes \CB_1)$ and $P \in \CL_1$. Hence,  
\begin{align*}
    \Alg(U(\CL_2 \otimes I)U^*) \subseteq \Alg(\CL_1 \otimes I)
\end{align*}
since $\Clm(U(\Alg(\CL_2) \otimes \CB_2)U^*)$ is $\Wot$-dense in $\Alg(U(\CL_2 \otimes I)U^*)$. Note that
\begin{align*}
    U(Q \otimes I)U^* \in \CL_1' \otimes I, \quad \forall Q \in \CL_2. 
\end{align*}
We have
\begin{align*}
    [I - U(Q \otimes I)U^*] T (U(Q \otimes I)U^*) = \Phi^{-1}\left ( [I - U(Q \otimes I)U^*] \Phi(T)  (U(Q \otimes I)U^*) \right ) = 0 
\end{align*}
for every $T \in \Clm(\Alg(\CL_1) \otimes \CB_1)$. Therefore, $\Alg(U(\CL_2 \otimes I)U^*) = \Alg(\CL_1 \otimes I)$. Since the lattices $U(\CL_2 \otimes I)U^*$ and $\CL_1 \otimes I$ are both reflexive, it follows that $\CL_2 \otimes I = U^*(\CL_1 \otimes I)U$. 

Since the abelian von Neumann algebra $\CL_i' \otimes I$ is generated by the projections in $\CL_i \otimes I$, the map $P \otimes I  \mapsto U^*(P \otimes I)U$ defines a $*$-isomorphism from $\CL_1' \otimes I$ to $\CL_2' \otimes I$. Let $V: \CH_2 \to \CH_1$ be the unitary that implements the following composition of $*$-isomorphisms:
\begin{align*}
    \CL_1' \to \CL_1' \otimes I \xrightarrow{U^*(-)U} \CL_2' \otimes I \to \CL_2', 
\end{align*}
where the first and last arrows are induced by the canonical $*$-isomorphisms $\CL_i' \simeq \CL_i' \otimes I$, for $i=1,2$ (see Corollary 10.15 in \cite{SZ19}). It is clear that $\Alg(\CL_2) = V^* \Alg(\CL_1) V$. 
\end{proof}

The following corollary follows immediately from Theorem~\ref{thm:stabl_to_unitary}.

\begin{cor}\label{cor:CSL_case}
Let $\CL_i$ be a multiplicity free CSL on a Hilbert space $\CH_i$, $i = 1, 2$. Then the following statements are equivalent:
    \begin{enumerate}
        \item[(i)] $\Alg(\CL_1)$ and $\Alg(\CL_2)$ are unitarily equivalent.;
        \item[(ii)] $\Alg(\CL_1)$ and $\Alg(\CL_2)$ are (completely) isometrically isomorphic;
        \item[(iii)] $\Alg(\CL_1) \otimes \Clk$ and $\Alg(\CL_2) \otimes \Clk$ are (completely) isometrically isomorphic.
    \end{enumerate}
\end{cor}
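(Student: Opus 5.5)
We prove the cyclic chain (i) $\Rightarrow$ (ii) $\Rightarrow$ (iii) $\Rightarrow$ (i).

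For (i) $\Rightarrow$ (ii), suppose $\Alg(\CL_2) = V^*\Alg(\CL_1)V$ for a unitary $V:\CH_2\to\CH_1$. Then $\Ad V^*$ restricts to a map between the two algebras. This map is the restriction of a $*$-isomorphism of $\Clb(\CH_1)$ onto $\Clb(\CH_2)$, so it is a complete isometry, and hence in particular an isometric isomorphism.

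For (ii) $\Rightarrow$ (iii), we consider the completely isometric and the merely isometric readings separately.

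\emph{Completely isometric reading.} Let $\psi$ be a completely isometric isomorphism. Then $\psi\otimes\id_{\Clk}$ extends to a completely isometric isomorphism of the minimal tensor products. This holds because the minimal tensor product is injective and functorial for complete isometries (Section~2.2 of \cite{BlMe04}).

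\emph{Isometric reading.} An isometric isomorphism need not tensor well a priori, so we avoid that and route through (i). The isometric isomorphism between $\Alg(\CL_1)$ and $\Alg(\CL_2)$ is the special case of Theorem~\ref{thm:stabl_to_unitary} in which the auxiliary algebras are trivial. That theorem requires algebras containing $\Clk$, so we cannot quote it directly. Instead we apply it in a form that only needs the tensor factor to be one-dimensional on a one-dimensional $\CK$. Alternatively, and more cleanly, we invoke the classical fact that isometric isomorphisms of CSL algebras with MASA diagonals are spatial up to the lattice. The cleanest route, however, is the following. Apply Theorem~\ref{thm:stabl_to_unitary} to the isomorphism $\psi$ with $\CB_1=\CB_2=\Clk$, after first upgrading $\psi$ to a stable isomorphism. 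This upgrade is exactly the point in question, so the argument would be circular.

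I therefore expect the intended reading to be that in (ii) and (iii) the parenthetical ``completely'' is taken uniformly. Under that reading, the isometric case of (ii) $\Rightarrow$ (iii) is obtained by passing through (i): once unitary equivalence is known, $\Ad(V^*\otimes I)$ implements a complete isometry of the stabilizations.

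\emph{Getting (ii) $\Rightarrow$ (i) in the isometric case.} An isometric isomorphism of $\Alg(\CL_1)$ onto $\Alg(\CL_2)$ is unital and maps the diagonal $\CL_1'$ $*$-isomorphically onto $\CL_2'$. I would rerun the argument of Lemma~\ref{lem:unitary_on_diag} and Theorem~\ref{thm:stabl_to_unitary} with $\CK$ one-dimensional. The $*$-isomorphism between the MASAs $\CL_1'$ and $\CL_2'$ is unitarily implemented by Corollary~10.15 of \cite{SZ19}. The invariance argument in the proof of the theorem then goes through verbatim, because $\Alg(\CL_i)$ is its own multiplier algebra and is WOT-closed. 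This produces (i).

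For (iii) $\Rightarrow$ (i), apply Theorem~\ref{thm:stabl_to_unitary} with $\CB_1=\CB_2=\Clk$. This is legitimate because $\Clk$ contains the compact operators and has a cai in its diagonal. A completely isometric isomorphism is in particular isometric, so both readings of (iii) are covered.

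The main obstacle is the isometric (non-complete) version of (ii) $\Rightarrow$ (iii). The plan is to avoid tensoring an isometry altogether: first deduce (i) by the $\dim\CK=1$ version of the argument of Theorem~\ref{thm:stabl_to_unitary}, and then tensor the implementing unitary with the identity on $\CK$.
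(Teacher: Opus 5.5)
Your final plan is correct and is essentially what the paper means by ``follows immediately from Theorem~\ref{thm:stabl_to_unitary}'': (i) gives the completely isometric forms of (ii) and (iii) via $\Ad V^*$ and $\Ad(V^*\otimes I)$, (iii) $\Rightarrow$ (i) is the theorem with $\CB_1=\CB_2=\Clk$, and (ii) $\Rightarrow$ (i) is the theorem with $\CK$ one-dimensional. You can quote the theorem directly in that last case, since the paper only requires $\CK$ to be separable and $\CB_i=\Cb$ contains the compacts on $\Cb$; the detours about circularity and about a ``uniform'' reading of ``completely'' are therefore unnecessary, because your argument already shows that every isometric and completely isometric version of (ii) and (iii) is equivalent to (i).
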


\begin{question}
In light of the preceding results, it is natural to ask whether the same assertion remains valid for reflexive algebras associated with reflexive lattices possessing commutative diagonals--for instance, those determined by lattices containing a CSL. One simple way to construct such a lattice is to adjoin a rank-one projection to a CSL; a special case of this construction is considered in \cite{Hou10}.
\end{question}

\section{Stable isomorphisms of semicrossed products of C$^*$-algebras by $\Zb^{+}$}

In this paper, a dynamical system is a pair $(\CA, \alpha)$, where $\CA$ is an approximately unital operator algebra and $\alpha: \CA \to \CA$ is a completely contractive endomorphism. In the case where $\CA$ is a C$^*$-algebra, the pair  $(\CA, \alpha)$ is said to be a C$^*$-dynamical system. The \textit{covariance algebra} $\CA(\alpha)$ is the linear space spanned by the elements of the form $\mathbf{s}^k a$, with $a \in \CA$, $k \geq 0$, endowed with the multiplication specified by the \textit{covariance relation}
\begin{align*}
    a \mathbf{s} = \mathbf{s} \alpha(a), \quad \forall a \in \CA. 
\end{align*}
An \textit{isometric covariant representation} of $(\CA, \alpha)$ on a Hilbert space $\CH$ is a pair $(\pi, V)$, where $\pi: \CA \to \Clb(\CH)$ is a completely contractive homomorphism and $V \in \Clb(\CH)$ is an isometry satisfying 
\begin{align*}
    \pi(a) V = V \pi(\alpha(a)), \quad \forall a \in \CA. 
\end{align*}
Such a pair $(\pi, V)$ extends uniquely to a representation of the covariance algebra $\CA(\alpha)$ on $\CH$, denoted by $\pi \rtimes_\alpha V$. 

For a dynamical system $(\CA, \alpha)$, the (isometric) semicrossed product $\CA \rtimes_{\alpha} \Zb^{+}$ is the universal operator algebra associated with the covariance relation. More precisely, it is the operator algebra generated by the image of $\CA(\alpha)$ under an isometric covariant representation $(\iota, \mathbf{v})$, subject to the following universal property: for every isometric covariant representation $(\pi, V)$, there exists a unique completely contractive homomorphism $\CA \rtimes_{\alpha} \Zb^{+} \to \Clb(\CH)$ rendering the diagram 
\begin{align*}
    \xymatrix{
        \CA(\alpha) \ar[rd]_-{\pi\rtimes_{\alpha} V} \ar[r]^{\iota \rtimes_{\alpha} \mathbf{v}} & \CA \rtimes_{\alpha} \Zb^{+} \ar[d]\\
       & \Clb(\CH)
    }
\end{align*}
commutative.

\begin{rem}\label{rem:nondeg}
Let $(\CA, \alpha)$ be a dynamical system with $\alpha$ multiplier-nondegenerate, i.e., for every $\cai$ $\{e_i\}$ of the operator algebra $\CA$, the net $\{\alpha(e_i)\}$ is again a $\cai$ of $\CA$.\footnote{Moreover, $\{e_i\}$ is a $\cai$ of $\CA \rtimes_{\alpha} \Zb^{+}$.} Suppose $(\pi, V)$ is an isometric covariant representation on a Hilbert space $\CH$. Since both $\pi(e_i)$ and $\pi(\alpha(e_i))$ converge in the $\Wot$ to the projection $P$ onto the subspace $\overline{\pi(\CA) \CH}$, we obtain
    \begin{align*}
        P V = \lim_{i} \pi(e_i) V = \lim_{i} V \pi(\alpha(e_i)) = VP. 
    \end{align*}
Hence $(P \pi( \cdot )P, VP)$ is an isometric covariant representation on $P\CH$, where $P \pi( \cdot )P$ a non-degenerate representation of $\CA$ (see, for example, Lemma 2.1.9 in \cite{BlMe04}). This shows that when $\alpha$ is multiplier-nondegenerate, we can always assume that the representation $\pi$  in an isometric covariant representation $(\pi, V)$ is non-degenerate.
\end{rem}

\begin{rem}\label{rem:diag_A}
By the universal property, the semicrossed product $\CA \rtimes_{\alpha} \Zb^{+}$ admits a gauge action
\begin{align*}
    \mathbb{T} &\to \aut(\CA \rtimes_{\alpha} \Zb^{+})\\
    \lambda &\mapsto \rho_{\lambda},
\end{align*}
where $\rho_{\lambda}$ sends $\mathbf{v}^ka$ to $\lambda^k \mathbf{v}^ka$. Then 
\begin{align*}
    \Phi_k(t) := \int_{\mathbb{T}} \rho_{\lambda}(t) \overline{\lambda^k} d \lambda, \quad t \in \CA \rtimes_{\alpha} \Zb^{+}, 
\end{align*}
defines a completely contraction from $\CA \rtimes_{\alpha} \Zb^{+}$ onto the closed subspace $\mathbf{v}^k \CA$. Moreover, the formal Fourier series $\sum_{k=0}^{\infty} \Phi_k(t)$ is Ces\'{a}ro summable to $t$, i.e.,   
\begin{align*}
    t = \lim_{n \to \infty} \sum_{k=0}^{n} \left ( 1 - \frac{k}{n+1}\right ) \Phi_k(t) 
\end{align*}
(see, for example, Chapter 2 in \cite{SS03}). 

Given a completely contractive representation $\pi: \CA \to \Clb(\CH)$, we define an isometric covariant representation  on $\CH \otimes l^2(\Zb^{+})$ defined by the complete contraction
\begin{align*}
    a \mapsto \operatorname{diag} \left (\pi(a), \pi(\alpha(a)), \pi(\alpha^2(a)), \ldots \right), \quad a \in \CA,
\end{align*}
together with the unilateral shift 
\begin{align*}
    V(\xi_0, \xi_1, \xi_2, \ldots) = (0, \xi_0, \xi_1, \xi_2, \ldots).
\end{align*}
It is then straightforward to see that if an operator $t \in \diag(\CA \rtimes_{\alpha} \Zb^{+})$ then $\Phi_k(t) = 0$ for every $k \geq 1$. Therefore, 
\begin{align*}
    \diag(\CA \rtimes_{\alpha} \Zb^{+}) = \diag (\CA).
\end{align*}
\end{rem}

The remainder of the section is devoted to show a result that implies among others that a completely isometric stable isomorphism between the semicrossed products $C_0(X) \rtimes_{\alpha} \Zb^{+}$ and $C_0(Y) \rtimes_{\beta} \Zb^{+}$ ensures an isometric isomorphism of the orginal algebras. The main ingredient of the proof is that such an isometric isomorphism induces a homeomorphism between the primitive ideal spaces of their diagonals. In what follows, we recall some basic facts about primitive ideals of C$^*$-algebras.\\

Let $\CA$ be a C$^*$-algebra and let $\FI(\CA)$ denote the collection of all closed ideals in $\CA$. An ideal of $\CA$ is called \textit{primitive} if it is the kernel of a non-zero irreducible representation of $\CA$. We denote by $\prim(\CA)$ the set of all primitive ideals of $\CA$. Endowed with the Jacobson (or hull-kernel) topology, $\prim(\CA)$ is a $T_0$-space. Furthermore the correspondence 
\begin{align*}
    \CI \mapsto \left \{\CJ\in \prim(\CA): \CI \subset \CJ \right \} 
\end{align*}
defines a bijection from $\FI(\CA)$ onto the set of closed subsets of $\prim(\CA)$ (see, for example, Section 5.4 in \cite{GM90}).

In this paper, we require an isomorphism invariant finer than $\prim(\CA)$ with its Jacobson topology. To this end, given any collection $M$ of primitive ideals of a C$^*$-algebra $\CA$, we introduce a topology on $M$ by mimicking the definition of the Jacobson topology. Let $\Cll_{M} \subseteq \FI (\CA)$ denote the meet-semilattice generated by intersections of elements of $M$. For $S \subseteq M$ and $\CJ \in \Cll_{M}$, define
\begin{align*}
    \ker S:= \bigcap_{\CI \in S} \CI \in \Cll_{M} \quad (\ker \emptyset = \CA)
\end{align*}
and 
\begin{align*}
  \hull (\CJ):=\{ \CI \in M \mid \CJ \subseteq \CI\}.  
\end{align*}
One can define a closure operation for a subset $S \subseteq M$ by 
\begin{align*}
 S':=\hull (\ker S).   
\end{align*}
By arguments identical to those of Theorem 5.4.6 in Murphy \cite{GM90}, this closure operation defines a topology on $M$. In fact, this topology coincides with the subspace topology induced on the subset $M$ by the Jacobson topology on $\prim (\CA)$ (see Remark \ref{rem:sub_top}). Therefore, we continue to refer to this topology as the  hull-kernel topology. In this paper, we consider the case $M= \mprim (\CA)$, the space of all minimal primitive ideals of $\CA$, endowed with the hull-kernel topology induced by the meet-semilattice of ideals it generates. When $\CA = C_0(X)$ is commutative, $\prim(\CA)$ and $ \mprim (\CA)$ are canonically isomorphic, as every primitive ideal of $C_0(X)$ is minimal. Beyond this case, however, these spaces may differ. 


\begin{rem}\label{rem:sub_top}
It is straightforward to verify that the hull-kernel topology on $M$ described above coincides with the subspace topology of the Jacobson topology. Indeed, for a subset $S \subset M$, 
   \begin{align*}
       \hull \ker S = \{\CJ \in \prim(\CA): \ker S \subseteq \CJ\} \cap M. 
   \end{align*}
Hence, the closure of $S$ in $M$ is precisely the intersection of $M$ with the closure of $S$ in $\prim(\CA)$. Conversely, let 
\begin{align*}
    S_0 := \{\CI \in \prim(\CA): \CJ \subseteq \CI\}
\end{align*}
be a closed subset of $\prim(\CA)$ in the Jacobson topology determined by an ideal $\CJ$ of $\CA$. Since
\begin{align*}
    \CJ \subset \ker (S_0 \cap M),  
\end{align*}
we have $(S_0 \cap M)' = S_0 \cap M$. Thus, the closed subsets of $M$ are exactly the restrictions to $M$ of closed subset of $\prim(\CA)$ on $M$.   
\end{rem}

In what follows, for a subset $S\subseteq X$ of a locally compact Hausdorff space, we write
\[
\Clz(S):= \left \{ f \in C_0(X)\mid f(s)=0, \mbox{ for all } s \in S \right\}.
\]
For $x \in X$, we suppress the braces and write $\Clz(\{x\})$ simply as $\Clz(x)$. The character of $C_0(X)$ given by evaluation at a point $x$ is denoted by $\ev_x$.

\begin{lem} \label{lem;primchar}
Let $X$ be a locally compact Hausdorff space and let $\CD$ be a primitive C$^*$-algebra. Then
\begin{itemize}
\item[(i)] Every irreducible representation of $C_0(X)\otimes \CD$ has the form $\ev_x\otimes \pi$, where $x \in X$ and $\pi$ an irreducible representation of $\CD$.

\item[(ii)] If $\ev_x\otimes \pi$ is as above and $\pi$ is faithful, then $\ker (\ev_x\otimes \pi) =\Clz(x)\otimes \CD$.

\item[(iii)] The minimal primitive ideals of $C_0(X) \otimes \CD$, i.e., the minimal elements of $\prim (C_0(X) \otimes \CD)$ under set-theoretic inclusion, are exactly the ideals 
\begin{align*}
    \Clz(x)\otimes \CD, \quad x \in X.
\end{align*}
Consequently, for a representation $\ev_x\otimes \pi$ as in (i), $\ker (\ev_x\otimes \pi)$ is minimal if and only if $\pi$ is faithful.
\end{itemize} 
\end{lem}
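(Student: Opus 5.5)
For part (i), I would identify $C_0(X)\otimes\CD$ with $C_0(X,\CD)$. Since $C_0(X)\otimes 1$ sits centrally in the multiplier algebra $\Clm(C_0(X)\otimes\CD)$, I would use it as follows. Let $\rho$ be an irreducible representation on $\CH$. Extend $\rho$ to the multiplier algebra. Then $\rho(C_0(X)\otimes 1)$ lies in the commutant $\rho(C_0(X)\otimes\CD)'=\Cb I$ by Schur's lemma, so the restriction is a character of $C_0(X)$. It is nonzero because $\rho$ is non-degenerate, so it equals $\ev_x$ for a unique $x$. Hence $\rho(f\otimes d)=f(x)\rho(1\otimes d)$, and $\pi(d):=\rho(1\otimes d)$ defines a representation of $\CD$. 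Concretely, $\pi$ is the extension of $\rho$ to $\Clm$ restricted to $1\otimes\CD$, or equivalently $\pi(d)=\lim\rho(e_\lambda\otimes d)$ for an approximate unit of $C_0(X)$. The representation $\pi$ is irreducible because $\rho=\ev_x\otimes\pi$ has the same range as $\pi$ on elementary tensors. This is the standard factorization of irreducible representations of a tensor product with an abelian factor.

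For part (ii), I would start from the observation that $\ev_x\otimes\pi$ factors as
\begin{align*}
C_0(X)\otimes\CD \xrightarrow{\ev_x\otimes\id} \CD \xrightarrow{\pi} \Clb(\CH).
\end{align*}
Here $\ev_x\otimes\id$ is evaluation of a $\CD$-valued function at $x$. Its kernel is $\{F\in C_0(X,\CD): F(x)=0\}$, and a partition-of-unity or approximate-unit argument identifies this with $\Clz(x)\otimes\CD$. Concretely, if $F(x)=0$, I would multiply $F$ by $g_n\otimes 1$, where $g_n\in C_0(X)$ vanishes near $x$ and equals $1$ off a shrinking neighbourhood. This approximates $F$ in norm by elements of $\Clz(x)\otimes\CD$. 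If $\pi$ is faithful, then $\ker(\ev_x\otimes\pi)=\ker(\ev_x\otimes\id)=\Clz(x)\otimes\CD$.

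For part (iii), the hypothesis that $\CD$ is primitive gives a faithful irreducible $\pi_0$. So by (ii) each $\Clz(x)\otimes\CD$ is a primitive ideal. Now take an arbitrary primitive ideal; by (i) it is $\ker(\ev_x\otimes\pi)$. Since $\ev_x\otimes\pi=\pi\circ(\ev_x\otimes\id)$, it equals $\{F: F(x)\in\ker\pi\}$, which contains $\Clz(x)\otimes\CD$. Also, $\Clz(x)\otimes\CD\subseteq\Clz(y)\otimes\CD$ forces $x=y$: otherwise I would pick $f\in\Clz(x)$ with $f(y)\neq0$ and $d\neq0$, and then $f\otimes d$ is in the first ideal but not the second. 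These two facts show that the minimal primitive ideals are exactly the ideals $\Clz(x)\otimes\CD$.

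For the final ``consequently'' statement, note that $\ker(\ev_x\otimes\pi)=\{F:F(x)\in\ker\pi\}$. By minimality, this kernel is minimal iff it equals $\Clz(x)\otimes\CD$, which happens iff $\ker\pi=0$: if $d\in\ker\pi$ is nonzero, then $g\otimes d$ with $g(x)\neq0$ lies in the kernel but not in $\Clz(x)\otimes\CD$. The only step needing care is the identification of $\ker(\ev_x\otimes\id)$ with $\Clz(x)\otimes\CD$ in (ii), that is, the approximation argument. It is routine, because $\Clz(x)\cong C_0(X\setminus\{x\})$ and $C_0(U)\otimes\CD=C_0(U,\CD)$ for open $U$.
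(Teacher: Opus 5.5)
Your proof is correct. Parts (i) and (iii) follow the paper's argument almost verbatim. In (i) you extend to the multiplier algebra, use Schur's lemma to get $\overline{\rho}(C_0(X)\otimes I)=\Cb I$, and read off $\ev_x$. In (iii) you observe that every primitive ideal contains some $\Clz(x)\otimes\CD$, and that $\Clz(x)\otimes\CD\not\subseteq\Clz(y)\otimes\CD$ for $y\neq x$. You also make explicit a point the paper leaves implicit: $\Clz(x)\otimes\CD$ is itself primitive, because $\CD$ has a faithful irreducible representation.

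The real difference is in (ii). The paper identifies $\ker(\ev_x\otimes\id)$ with $\Clz(x)\otimes\CD$ by tensoring the exact sequence $0\to\Clz(x)\to C_0(X)\to\Cb\to 0$ with $\CD$, and invokes nuclearity of $C_0(X)$ to keep it exact. You instead work in the picture $C_0(X,\CD)$ and show $\{F: F(x)=0\}=\Clz(x)\otimes\CD$ by hand. The paper's route is a one-line appeal to a black box. Yours is elementary and gives the explicit description $\ker(\ev_x\otimes\pi)=\{F: F(x)\in\ker\pi\}$. That description makes (iii) and the final ``consequently'' transparent, and it is the viewpoint the paper itself adopts later in Proposition~\ref{prop:ideal_cor}.

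One small correction to your approximation sketch. When $X$ is non-compact, a function that vanishes near $x$ and equals $1$ off a neighbourhood of $x$ cannot lie in $C_0(X)$. Instead take $g=1-k$, where $k\in C_c(X)$, $0\le k\le 1$, $k(x)=1$, and $k$ is supported in a neighbourhood on which $\|F\|<\epsilon$. Then $g\otimes 1$ is a multiplier with $\|F-(g\otimes 1)F\|\le\epsilon$.

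Also, $(g\otimes1)F$ vanishing near $x$ does not by itself place it in $\Clz(x)\otimes\CD$. Apply $g\otimes 1$ to a finite sum $\sum f_i\otimes d_i$ that is $\epsilon$-close to $F$. Since each $gf_i\in\Clz(x)$, this gives an element of $\Clz(x)\odot\CD$ within $2\epsilon$ of $F$. Alternatively, your closing remark $\Clz(x)\otimes\CD\cong C_0(X\setminus\{x\},\CD)$ settles the point directly.
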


\begin{proof} 
    (i). Let $\sigma$ be an irreducible representation of $C_0(X)\otimes \CD$ on Hilbert space $\CH$, and let $\overline{\sigma}$ denote its extension on the multiplier algebra $\Clm(C_0(X)\otimes \CD)$. Since,
\[
\overline{\sigma}(C_0(X)\otimes I)\subseteq \sigma(C_0(X)\otimes \CD)'=\Cb I,
\]
there exists $x \in X$ so that $\overline{\sigma}(f\otimes I)=\ev_x(f)I$, for all $f \in C_0(X)$. Therefore
\[
\sigma(C_0(X)\otimes \CD)\subseteq \overline{\sigma}(C_0(X)\otimes I)\overline{\sigma}(I\otimes \CD)=\overline{\sigma}(I\otimes \CD).
\]
Thus the representation
\begin{align*}
    \pi: \CD &\longrightarrow \Clb(\CH)\\
     d &\longmapsto \overline{\sigma}(I\otimes d)   
\end{align*}
is irreducible and $\sigma = \ev_x\otimes \pi$.

(ii). It is sufficient to establish the result in the case $\pi =\id$. Consider the short exact sequence
\[
0 \longrightarrow \Clz(x)\hookrightarrow C_0(X) \xlongrightarrow{\ev_x} \Cb \longrightarrow 0.
\]
 Since $C_0(X)$ is nuclear, and hence exact, the short exact sequence 
\[
0 \longrightarrow \Clz(x)\otimes \CD\hookrightarrow C_0(X) \otimes \CD \xlongrightarrow{\ev_x\otimes \id } \Cb\otimes \CD\simeq \CD \longrightarrow 0,
\]
is also exact. Consequently, $\Clz(x)\otimes \CD$ is the kernel of the irreducible representation $\ev_x \otimes \id$.

(iii). Let 
\[
\sigma : C_0(X) \otimes \CD \longrightarrow \Clb(\CH)
\]
be an irreducible representation. Then $\sigma = \ev_x\otimes \pi$ as in (i), and hence 
\begin{align*}
     \Clz(x)\otimes \CD \subset \ker \sigma, \quad C_0(X) \otimes \ker(\pi) \subset \ker \sigma .
\end{align*}
If $\ker(\pi) \neq \{0\}$, we have
\begin{align*}
    C_0(X) \otimes \ker(\pi) \nsubseteq \Clz(x)\otimes \CD.
\end{align*}
Hence $\ker \sigma$ is not minimal. And the desired conclusion follows from the fact that 
\begin{align*}
    \Clz(x)\otimes \CD \nsubseteq \Clz(y)\otimes \CD,
\end{align*}
whenever $y \neq x$.
\end{proof}

We now prove that, if $\CD$ is a primitive C$^*$-algebra, the spaces $\mprim(C_0(X)\otimes \CD)$ and $\prim(C_0(X))\simeq X$ are canonically isomorphic.

\begin{prop} \label{prop:ideal_cor}
Let $X$ be a locally compact Hausdorff space and let $\CD$ be a primitive C$^*$-algebra. Then the map
\begin{align*}
    \phi_{X, \CD}: X &\longrightarrow \mprim(C_0(X)\otimes \CD)\\   
                   x &\longmapsto \quad \Clz(x)\otimes \CD
\end{align*}
is a homeomorphism.
\end{prop}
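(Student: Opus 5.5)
Lemma~\ref{lem;primchar}(iii) already gives that $\phi_{X,\CD}$ is a well-defined surjection onto $\mprim(C_0(X)\otimes \CD)$. Injectivity is the observation at the end of that proof: for $x\neq y$ pick $f\in C_0(X)$ with $f(y)=0\neq f(x)$ and a nonzero $d\in\CD$. Then $f\otimes d\in \Clz(y)\otimes\CD$, while $(\ev_x\otimes\id)(f\otimes d)=f(x)d\neq 0$, so $f\otimes d\notin \Clz(x)\otimes\CD$ by Lemma~\ref{lem;primchar}(ii). It therefore remains to show that $\phi_{X,\CD}$ and its inverse are continuous. Since the topology on $\mprim$ is the hull-kernel one (equivalently, by Remark~\ref{rem:sub_top}, the subspace topology from $\prim$), this amounts to showing that $\phi_{X,\CD}$ sends closed sets to closed sets and pulls closed sets back to closed sets.

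The key computation is the kernel of the image of a set. For $S\subseteq X$ I would show
\begin{align*}
\ker \phi_{X,\CD}(S)=\bigcap_{x\in S}\Clz(x)\otimes\CD=\Clz(S)\otimes\CD=\Clz(\overline{S})\otimes\CD.
\end{align*}
The inclusion $\supseteq$ is clear. For $\subseteq$, identify $C_0(X)\otimes\CD\cong C_0(X,\CD)$ (valid because $C_0(X)$ is nuclear). Under this identification $\Clz(x)\otimes\CD$ is the kernel of evaluation at $x$, by Lemma~\ref{lem;primchar}(ii). The intersection is then the set of $\CD$-valued functions vanishing on $S$, hence on $\overline S$. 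To identify this with $\Clz(\overline S)\otimes\CD\cong C_0(X\setminus\overline S,\CD)$, I would use the exact sequence $0\to C_0(X\setminus\overline S)\otimes\CD\to C_0(X)\otimes\CD\to C_0(\overline S)\otimes\CD\to0$, which is exact by nuclearity exactly as in Lemma~\ref{lem;primchar}(ii).

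With this formula, the hull of $\Clz(\overline S)\otimes\CD$ inside $\mprim$ is $\{\Clz(y)\otimes\CD : \Clz(\overline S)\otimes\CD\subseteq \Clz(y)\otimes\CD\}$. Applying $\ev_y\otimes\id$ shows the containment holds iff $\Clz(\overline S)\subseteq \Clz(y)$, i.e.\ iff $y\in\overline S$, using Urysohn's lemma for locally compact Hausdorff spaces. Thus the closure of $\phi_{X,\CD}(S)$ equals $\phi_{X,\CD}(\overline S)$ for every $S\subseteq X$. Since $\phi_{X,\CD}$ is a bijection that preserves closures in both directions, it is a homeomorphism.

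The main obstacle is the identity $\bigcap_{x\in S}(\Clz(x)\otimes\CD)=\Clz(\overline S)\otimes\CD$, since intersections do not commute with minimal tensor products in general. I would handle it via the concrete picture $C_0(X,\CD)$ and the nuclearity/exactness argument above. The Urysohn step and the closure bookkeeping are routine.
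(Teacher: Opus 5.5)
Your proposal is correct and follows the paper's own route: identify $C_0(X)\otimes\CD$ with $C_0(X,\CD)$, show $\bigcap_{x\in S}\Clz(x)\otimes\CD=\Clz(\overline S)\otimes\CD$, and conclude that $\phi_{X,\CD}$ preserves closures. The only differences are minor. You justify that identity with the nuclearity and exact-sequence argument, where the paper uses a partition of unity. You also check the hull computation explicitly via $\ev_y\otimes\id$ and Urysohn, a step the paper simply asserts.
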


\begin{proof}
By Lemma~\ref{lem;primchar}, $\phi_{X, \CD}$ is a well-defined bijection. In order to prove bicontinuity, identify for the moment  $C_0(X)\otimes \CD$ canonically with the $C_0(X, \CD)$, i.e., the $\CD$-valued, continuous functions on $X$ vanishing at infinity. Under this identification, an elementary partition of unity argument shows that, for any closed set $S\subseteq X$, the ideal $\Clz(S)\otimes \CD$ is identified with the elements of $C_0(X, \CD)$ vanishing on $S$. With this in mind, given a closed set $S \subseteq X$,  one can show that 
\[
\Clz(S)\otimes\CD = \bigcap_{x \in S} \left (\Clz(x)\otimes \CD \right).
\]
Given an arbitrary subset $M \subseteq X$, we compute:
\begin{align*}
\phi_{X, \CD} \left ( \left \{\Clz(x)\mid x \in M \right\}' \right)&=\phi_{X, \CD}\left(\hull\left(\ker( \left \{\Clz(x)\mid x \in M \right\}\right) \right)\\
            &=\phi_{X, \CD}\left (\{\Clz(x)\mid x \in \overline{M}\} \right) \\
            &= \left \{\Clz(x)\otimes \CD\mid x \in \overline{M} \right\}\\
            &= \hull \left(\Clz \left (\overline{M} \right)\otimes\CD \right)\\
            &=\hull\left(\bigcap_{x \in \overline{M}} \left (\Clz(x)\otimes \CD \right)\right)\\
            &= \hull\left(\bigcap_{x \in M} \left (\Clz(x)\otimes \CD \right)\right)\\
            &=\hull\left(\ker \left (\{\Clz(x) \otimes \CD\mid x \in M\} \right )\right)  \\
            &= \left \{\phi_{X, \CD}(\Clz(x))\mid x \in M \right\}'.
\end{align*}
Hence, $\phi_{X, \CD}$ preserves closures, and its bicontinuity follows.
\end{proof}


A $*$-homomorphism $\gamma: \CA \to \CB$ between C$^*$-alegbras is said to \textit{preserve irreducible representations} if, for every nonzero irreducible representation $\varphi$ of $\CB$, the composition $\varphi \circ \gamma$ is a nonzero irreducible representation of $\CA$. In this case, $\gamma$ induces a map 
\begin{align*}
    \widehat{\gamma} : \quad \prim(\CB) &\longrightarrow \prim (\CA) \\
                        \ker(\varphi) \quad  &\longmapsto  \quad \ker(\varphi \circ \gamma)= \gamma^{-1}(\ker(\varphi)).
\end{align*}
It is clear that every surjective $*$-homomorphism $\gamma: \CA \to \CB$ preserves irreducible representations. In particular, if $\gamma$ is a $*$-isomorphism, the induced map $\hat{\gamma}: \prim(\CB) \to \prim(\CA)$ is a homeomorphism. However, for a general map that merely preserves irreducible representations, continuity of the map $\hat{\gamma}$ is not automatic. Nevertheless, in our case, for a C$^*$-dynamical system $(\CA, \alpha)$, by restricting to a carefully chosen subset of $\prim (\CA)$, we can ensure that $\widehat{\alpha}$ is continuous on that subset.\newline

Before proceeding, we pause to present a result that explains why our analysis is restricted to non-degenerate $*$-endomorphisms of C$^*$-algebras.

\begin{prop}\label{prop:pr_nd}
   Let $\gamma: \CA \to \CB$ be a $*$-homomorphism between C$^*$-algebras. If $\gamma$ preserves irreducible representations, then $\gamma$ is non-degenerate. Conversely, if $\CA$ and $\CB$ are commutative C$^*$-algebras and $\gamma$ is non-degenerate, then  $\gamma$ preserves irreducible representations.
\end{prop}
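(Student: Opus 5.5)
The proof splits into the two directions of the statement.

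\emph{First direction.} Suppose $\gamma$ preserves irreducible representations but is degenerate. Let $\CJ = \overline{\Span}\,\gamma(\CA)\CB$, a closed right ideal of $\CB$, and let $\CI = \overline{\Span}\,\CB\gamma(\CA)\CB$, the closed two-sided ideal generated by $\gamma(\CA)$. There are two cases.

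In the first case, $\CI \neq \CB$. Then $\CB/\CI$ is a nonzero C$^*$-algebra and so has a nonzero irreducible representation $\varphi_0$. Composing with the quotient map gives a nonzero irreducible representation $\varphi$ of $\CB$ that vanishes on $\gamma(\CA)$. Hence $\varphi\circ\gamma = 0$, which contradicts preservation of irreducible representations.

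In the second case, $\CI = \CB$ but $\CJ \neq \CB$. I would use the standard correspondence between closed right ideals and states, or equivalently pure states (cf. Murphy \cite{GM90}, Section 5.3). Since $\CJ$ is a proper closed right ideal, there is a pure state $\omega$ of $\CB$ with $\CJ^*\CJ$, hence $\CJ \cap \CJ^*$, contained in its left kernel... The cleaner route is as follows. Take $\varphi$ to be the GNS representation of such a pure state, with cyclic vector $\xi$. Then $\varphi(\gamma(\CA))\xi$ should be controlled, and one concludes that $\varphi(\gamma(\CA))$ acts degenerately on $\CH_\varphi$. Its essential space $\overline{\varphi(\gamma(\CA))\CH_\varphi}$ is then a proper, nonzero (because $\CI = \CB$), invariant subspace for $\varphi\circ\gamma$. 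It is not obviously reducing for $\varphi(\CB)$, but that is not needed: an irreducible representation must be nondegenerate by definition. So $\varphi\circ\gamma$ fails to be irreducible, a contradiction. I expect the main obstacle to be producing the pure state cleanly. What I would try first is the following simpler argument, which works for any $\varphi$. The projection $P$ onto $\overline{\varphi(\gamma(\CA))\CH}$ is the strong limit of $\varphi(\gamma(e_i))$. So nondegeneracy of every irreducible $\varphi\circ\gamma$ means $\gamma(e_i)\to 1$ strictly in every irreducible representation. One then passes to the universal atomic representation, where it is known (Murphy's Theorem 5.3.x / Kadison transitivity) that a net of positive contractions $\gamma(e_i)$ with $\gamma(e_i)b\to b$ pointwise... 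Honestly, the most robust approach is this. Suppose $\overline{\gamma(\CA)\CB}\neq\CB$. Then the hereditary subalgebra generated by $\gamma(\CA)$ is proper. Hence some pure state of $\CB$ vanishes on it, by Kadison: a proper hereditary subalgebra is annihilated by some pure state of $\CB$, namely one supported on the complement. Take its GNS representation $\varphi$ with cyclic vector $\xi$. Then $\varphi(\gamma(a))\xi = 0$ for all $a$, so $\xi$ lies in the kernel of $\varphi\circ\gamma$. Thus $\varphi\circ\gamma$ is either zero or degenerate, and in either case it is not a nonzero irreducible representation.

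\emph{Converse.} Let $\CA = C_0(X)$ and $\CB = C_0(Y)$, and let $\gamma$ be nondegenerate. The irreducible representations of $\CB$ are the characters $\ev_y$, each on $\Cb$. The composition $\ev_y\circ\gamma$ is a one-dimensional representation. It is automatically irreducible provided it is nonzero. If $\ev_y\circ\gamma = 0$, then every element of $\overline{\gamma(\CA)\CB}$ vanishes at $y$. But nondegeneracy gives $\overline{\gamma(\CA)\CB} = \CB$, and some $g\in\CB$ has $g(y)\neq 0$, a contradiction. Hence $\ev_y\circ\gamma$ is a nonzero irreducible representation, as required.
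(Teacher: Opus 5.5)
Your closing argument for the first direction (the paragraph beginning ``the most robust approach'') is the paper's argument. If $\gamma$ is degenerate, some pure state $\rho$ of $\CB$ annihilates the proper left ideal $\overline{\CB\gamma(\CA)}$, or equivalently the proper hereditary subalgebra $\overline{\gamma(\CA)\CB\gamma(\CA)}$ it determines. This is the standard fact the paper takes from Murphy's Theorem 5.3.3; it is not really a theorem of Kadison. The GNS cyclic vector $\xi_\rho$ then satisfies $\pi_\rho(\gamma(\CA))\xi_\rho = 0$, so $\pi_\rho\circ\gamma$ is not a nonzero irreducible representation.

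That argument uses only $\overline{\gamma(\CA)\CB}\neq\CB$, so it already covers your Case~1. You should delete the case split, the unjustified claim in Case~2 that $\varphi(\gamma(\CA))$ ``acts degenerately'', and the abandoned universal-atomic-representation attempt.

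For the converse you take a slightly different and more elementary route than the paper. The paper uses Gelfand duality to write $\gamma(f) = f\circ\widehat{\gamma}$ for a proper continuous map $\widehat{\gamma}\colon Y\to X$, and reads off $\ev_y\circ\gamma = \ev_{\widehat{\gamma}(y)}$. You instead note that $\ev_y\circ\gamma$ is one-dimensional, hence irreducible once nonzero, and get nonvanishing directly from $\overline{\gamma(\CA)\CB}=\CB$. Your version avoids the proper-map characterization and shows that only commutativity of $\CB$ is needed, which is consistent with the paper's $\Cb\to M_2(\Cb)$ example. The paper's version produces the point map $\widehat{\gamma}$, which is the form in which non-degenerate endomorphisms of $C_0(X)$ are used in the rest of Section~3.
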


\begin{proof}
Assume that $\gamma$ is degenerate, then $\overline{\CB \gamma(\CA)}$ is a proper closed left ideal in $\CB$. There exists a pure state $\rho$ of $\CB$ such that 
    \begin{align*}
        \rho(t^*t) = 0, \quad \forall t \in \overline{\CB\gamma(\CA)} 
    \end{align*}
(see, e.g., Theorem 5.3.3 in \cite{GM90}). Let $\pi: \CB \to \CH_{\rho}$ be the GNS representation associated with the pure state $\rho$, and let $\xi_{\rho} \in \CH_{\rho}$ be the unique cyclic unit vector implementing $\rho$, i.e., 
    \begin{align*}
        \rho(b) = \braket{\xi_{\rho}}{\pi(b) \xi_\rho}, \quad \forall b \in \CB
    \end{align*}
(see, e.g., Theorem 5.1.1 in \cite{GM90}). Since $\rho(\gamma(aa^*)) = 0$ for every $a \in \CA$, it follows that $\pi \circ \gamma(\CA) \xi_{\rho} = \{0\}$. Thus $\pi \circ \gamma$ cannot be an irreducble representation of $\CA$. Recall that $\pi$ is an irreducible representation of $\CB$. We conclude that $\gamma$ does not preserve irreducible representations. 

Conversely, let $X, Y$ be locally compact Hausdorff spaces so that $\CA=C_0(X)$ and $\CB=C_0(Y)$. Let $\gamma: C_0(X) \to C_0(Y)$ be a $*$-homomorphism. Since $\gamma$ is non-degenerate, there exists a proper continuous map $\widehat{\gamma}: Y \to X$ such that
    \begin{align*}
        \gamma(f) = f \circ \widehat{\gamma}, \quad \forall f \in C_0(X)
    \end{align*}
(see, for example, Corollary C.48 in \cite{KL17}). Then 
\begin{align*}
    \ev_y \circ \gamma = \ev_{\widehat{\gamma}(y)},
\end{align*}
which is irreducible. 
\end{proof}

\begin{rem}
The third sentence of Proposition~\ref{prop:pr_nd} almost never holds in general, as illustrated by the simple example
\begin{align*}
    \Cb &\to M_2(\Cb)\\
    \lambda &\mapsto \lambda I_2.
\end{align*}   
\end{rem}

\begin{prop}
Let $X$ be a locally compact Hausdorff space, and let $\CD$ be a primitive C$^*$-algebra. Suppose $\alpha$ is a non-degenerate $*$-endomorphism of $C_0(X)$ induced by a proper continuous self map $\widehat{\alpha}$ of $X$, i.e., $\alpha(f) = f \circ \widehat{\alpha}$. Then 
\begin{equation}  \label{eq;incl}
\widehat{\alpha\otimes \id}\left(\mprim(C_0(X)\otimes \CD)\right)\subseteq \mprim(C_0(X)\otimes \CD)
\end{equation}
and the restriction of $\widehat{\alpha\otimes \id}$ on $\mprim(C_0(X)\otimes \CD)$, denoted again as $\widehat{\alpha\otimes \id}$ is continuous with respect to the hull-kernel topology.
\end{prop}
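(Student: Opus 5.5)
First, $\alpha\otimes\id$ preserves irreducible representations, so that $\widehat{\alpha\otimes\id}$ is defined on all of $\prim(C_0(X)\otimes\CD)$. By Lemma~\ref{lem;primchar}(i), every irreducible representation of $C_0(X)\otimes\CD$ has the form $\ev_x\otimes\pi$ with $\pi$ an irreducible representation of $\CD$. Composing with $\alpha\otimes\id$ gives, on elementary tensors,
\begin{align*}
(\ev_x\otimes\pi)\circ(\alpha\otimes\id)(f\otimes d) = f(\widehat{\alpha}(x))\,\pi(d),
\end{align*}
so $(\ev_x\otimes\pi)\circ(\alpha\otimes\id)=\ev_{\widehat{\alpha}(x)}\otimes\pi$. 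This representation is nonzero and irreducible, since its range contains $\overline{\sigma}(I\otimes\CD)=\pi(\CD)$ up to the scalar $f(\widehat\alpha(x))$, which can be chosen nonzero. Hence $\widehat{\alpha\otimes\id}(\ker(\ev_x\otimes\pi))=\ker(\ev_{\widehat\alpha(x)}\otimes\pi)$.

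For the inclusion \eqref{eq;incl}, take a minimal primitive ideal. By Lemma~\ref{lem;primchar}(iii) it is $\Clz(x)\otimes\CD=\ker(\ev_x\otimes\pi)$ with $\pi$ faithful; one such $\pi$ exists because $\CD$ is primitive. Its image is $\ker(\ev_{\widehat\alpha(x)}\otimes\pi)$ with the same faithful $\pi$, which by Lemma~\ref{lem;primchar}(ii)–(iii) equals $\Clz(\widehat\alpha(x))\otimes\CD$ and is minimal. So, in the notation of Proposition~\ref{prop:ideal_cor}, the restriction satisfies
\begin{align*}
\widehat{\alpha\otimes\id}\circ\phi_{X,\CD}=\phi_{X,\CD}\circ\widehat\alpha .
\end{align*}

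Continuity then follows by transport of structure. Proposition~\ref{prop:ideal_cor} says $\phi_{X,\CD}$ is a homeomorphism from $X$ onto $\mprim(C_0(X)\otimes\CD)$ with the hull-kernel topology. Therefore
\begin{align*}
\widehat{\alpha\otimes\id}|_{\mprim}=\phi_{X,\CD}\circ\widehat\alpha\circ\phi_{X,\CD}^{-1},
\end{align*}
which is a composition of continuous maps, since $\widehat\alpha$ is continuous.

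The only mildly delicate step is checking that the composed representation is nonzero and irreducible and carries the \emph{same} faithful $\pi$. This is exactly what makes minimality transfer. I would verify it directly on elementary tensors $f\otimes d$, whose span is dense, choosing $f$ with $f(\widehat\alpha(x))\neq 0$. With that established, the rest is formal, using Lemma~\ref{lem;primchar} and Proposition~\ref{prop:ideal_cor}.
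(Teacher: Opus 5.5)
Your proposal is correct and follows essentially the same route as the paper. Both identify the image of $\Clz(x)\otimes\CD$ as $\Clz(\widehat\alpha(x))\otimes\CD$ and then write the restriction as $\phi_{X,\CD}\circ\widehat\alpha\circ\phi_{X,\CD}^{-1}$ using Proposition~\ref{prop:ideal_cor}. The only difference is cosmetic: the paper computes $(\alpha\otimes\id)^{-1}(\Clz(x)\otimes\CD)$ directly in the $C_0(X,\CD)$ picture, while you compute $(\ev_x\otimes\pi)\circ(\alpha\otimes\id)=\ev_{\widehat\alpha(x)}\otimes\pi$ and invoke Lemma~\ref{lem;primchar}(ii), which has the small bonus of explicitly checking that $\alpha\otimes\id$ preserves irreducible representations.
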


\begin{proof}
By viewing $C_0(X)\otimes \CD$ as continuous, $\CD$-valued functions vanishing at infinity, one can easily see that 
\begin{equation*}
(\alpha\otimes \id)^{-1}(\Clz(x)\otimes \CD)=\Clz(\widehat{\alpha}(x))\otimes \CD, \quad x \in X.
\end{equation*}
Combined with Lemma~\ref{lem;primchar}, this proves \eqref{eq;incl}. Finally if $\phi_{X, \CD}$ is the homeomorphism of Proposition~\ref{prop:ideal_cor}, then $\widehat{\alpha\otimes \id}= \phi_{X, \CD} \circ \widehat{\alpha} \circ \phi_{X, \CD}^{-1}$. Therefore $\widehat{\alpha\otimes \id}$ is continuous.
\end{proof}

Let $\CA$ be an operator algebra. We denote by $\rep(\CA)$ the set of non-degenerate, contractive representations of $\CA$ of the form
\begin{align*}
    \CA &\to \Clb(\CH \oplus \CH) \\
    a &\mapsto 
   \begin{pmatrix}
     \varphi_{1}(a) & 0\\
      \varphi_3(a) & \varphi_2(a)
   \end{pmatrix}, 
\end{align*}
where the following hold:
\begin{itemize}
    \item $\varphi_1$ and $\varphi_2$ restrict to irreducible representations of $\diag(\CA)$ on $\CH$;
    \item $\varphi_3$ is nonzero.
\end{itemize}

\begin{rem}
   Let 
   \begin{align*}
   a \mapsto 
   \begin{pmatrix}
     \varphi_{1}(a) & 0 \\
     \varphi_3(a) & \varphi_2(a)
   \end{pmatrix}
\end{align*} 
be a contractive representation in $\rep(\CA)$. For every $a\in \diag(\CA)$, we have $\varphi_3(a) = 0$ since the contractive homomorphism maps the diagonal $\diag(\CA)$ into the diagonal of the operator algebra
        \begin{align*}
            \left \{ 
            \begin{pmatrix}
                A_{11} & 0\\
                A_{21} & A_{22}
            \end{pmatrix} : A_{11}, A_{21}, A_{22} \in \Clb(\CH) \right \}.
        \end{align*} 
\end{rem}

\begin{example}\label{exam:rep_exp}
    Let $(\CA, \alpha)$ be a C$^*$-dynamical system such that $\alpha$ preserves irreducible representations of $\CA$. For each irreducible representation $\pi: \CA \to \Clb(\CH)$, consider the restriction of the Fock representation of $\CA \rtimes_{\alpha} \Zb^{+}$ on $\CH \otimes l^2(\Zb^{+})$ (see Remark~\ref{rem:diag_A}) to the upper-left $2\times 2$ concer. This gives a representation $\Phi_{\pi} \in \rep(\CA \rtimes_{\alpha} \Zb^{+})$ defined by 
    \begin{align*}
    \Phi_{\pi}(a_0 + \mathbf{v}a_1 + \mathbf{v}^2 a_2 + \cdots) :=   
       \begin{pmatrix}
           \pi(a_0) & 0\\ 
           \pi(a_1) & \pi(\alpha(a_0))
       \end{pmatrix}, \quad a_i \in \CA.
\end{align*}\newline
\end{example}

In what follows, we say that an operator algebra $\CB$ has an \textit{irreducibly acting} diagonal if $\CB$ admits a contractive representation on a Hilbert space whose restriction to the diagonal is both faithful and irreducible. In particular, $\diag(\CB)$ is primitive. Examples of such algebras include all (represented) operator algebras containing the compact operators, which were considered in the previous section.


\begin{lem}\label{lem:diag_rel} 
Let $(C_0(X), \alpha)$ be a C$^*$-dynamical system in which $\alpha$ is induced by a proper continuous map $\widehat{\alpha}$, and let $\CB$ be an approximately unital operator algebra with an irreducibly acting diagonal. Suppose  
    \begin{align*}
        \Phi(t):= \begin{pmatrix}
              \varphi_1(t) & 0\\
              \varphi_3(t) & \varphi_2(t)
        \end{pmatrix} \in \Clb(\CH\otimes \CH), 
        \quad t \in (C_0(X) \rtimes_{\alpha} \Zb^{+})\otimes \CB,
    \end{align*}
is a representation in $\rep((C_0(X) \rtimes_{\alpha} \Zb^{+})\otimes \CB)$ with 
\[\varphi_i |_{C_0(X)\otimes \diag (B)}=\ev_{x_i}\otimes \pi_i, \]
where $x_i \in X$ and $\pi_i:\diag (B) \rightarrow \Clb(\CH)$ are faithful irreducible representations for $i=1, 2$. If $x_1\neq x_2$, then $\widehat{\alpha}(x_1)=x_2$.
\end{lem}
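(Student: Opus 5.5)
The approach is to exploit the homomorphism property of $\Phi$, which forces the off-diagonal corner to intertwine: for all $s,t$,
\begin{align*}
\varphi_3(st) = \varphi_3(s)\varphi_1(t) + \varphi_2(s)\varphi_3(t).
\end{align*}
By the Remark following the definition of $\rep$, $\varphi_3$ vanishes on the diagonal. By Remark~\ref{rem:diag_A} the diagonal of $(C_0(X)\rtimes_\alpha\Zb^+)\otimes\CB$ contains $C_0(X)\otimes \diag(\CB)$. Hence for $d,d' \in C_0(X)\otimes\diag(\CB)$ and any $t$ we get the bimodule identity
\begin{align*}
\varphi_3(dtd') = \varphi_2(d)\varphi_3(t)\varphi_1(d').
\end{align*}

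First I would show that $\varphi_3$ vanishes on $C_0(X)\otimes \CB$, i.e.\ on the "degree zero" part, so that it is supported on the terms $\mathbf{v}^k a\otimes b$ with $k\ge1$. Since $\varphi_3$ is continuous and, by the Fourier/Cesàro decomposition of Remark~\ref{rem:diag_A} tensored with $\CB$, the span of elements $(\mathbf{v}^k f)\otimes b$ is dense, $\varphi_3$ is determined by its values on them. For $k=0$, the element $f\otimes b$ lies in $C_0(X)\otimes\CB$, and I would argue that $\varphi_3$ vanishes there using the factorization $f\otimes b = (f_1\otimes e)(f_2\otimes b)$ with $e$ from a cai in $\diag(\CB)$. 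If this is awkward, I would use instead that $\overline{\varphi_1}$ and $\overline{\varphi_2}$ send $g\otimes I$ to $g(x_i)I$, and compute directly.

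Next comes the key covariance computation, using the multiplier extension (Proposition~\ref{prop:ext_to_mul}, applied to $\Phi$, which is non-degenerate) to access $g\otimes I$ for $g\in C_0(X)$. Since $\varphi_i(g\otimes I)=g(x_i)I$, the bimodule identity gives
\begin{align*}
\varphi_3\bigl((g\otimes I)\,t\,(h\otimes I)\bigr) = g(x_2)h(x_1)\,\varphi_3(t).
\end{align*}
On the other hand, the covariance relation gives $(g\otimes I)(\mathbf{v}^k f\otimes b)= \mathbf{v}^k(\alpha^k(g)f)\otimes b$. Taking $k\ge1$ and $h$ with $h(x_1)=1$, we obtain
\begin{align*}
g(x_2)\varphi_3(\mathbf{v}^k f\otimes b)=\varphi_3(\mathbf{v}^k f\alpha^k(g) h\otimes b),
\end{align*}
and applying the right module identity on the other side turns this into $g(\widehat{\alpha}^k(x_1))\varphi_3(\mathbf{v}^k f\otimes b)$. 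So wherever $\varphi_3(\mathbf{v}^kf\otimes b)\ne0$ we get $g(x_2)=g(\widehat\alpha^k(x_1))$ for all $g$, i.e.\ $x_2=\widehat\alpha^k(x_1)$.

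Finally I would rule out $k\ge2$ contributions. Write $\mathbf{v}^k f\otimes b = (\mathbf{v} \otimes e)(\mathbf{v}^{k-1}f\otimes b)$, approximately, using a cai $e$ of $\diag(\CB)$ and a function in $C_0(X)$ to make $\mathbf{v}$ factors legitimate elements. The derivation identity then expresses $\varphi_3$ of this product as
\begin{align*}
\varphi_3(\cdot)\varphi_1(\cdot)+\varphi_2(\cdot)\varphi_3(\cdot).
\end{align*}
Here $\varphi_1$ and $\varphi_2$ of terms of positive degree vanish, since $\varphi_i$ composed with the diagonal inclusion is a character-type map and $\varphi_i$ kills $\mathbf{v}$-terms by the same module argument (it sends $\mathbf{v}^kf\otimes b$ to an element intertwining $\ev_{x_i}$ and $\ev_{\widehat\alpha^k(x_i)}$, and this must vanish whenever $\widehat\alpha^k(x_i)\ne x_i$). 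This is the step I expect to be the main obstacle: controlling $\varphi_1,\varphi_2$ on positive-degree terms when $x_i$ may be periodic. I would handle it by noting that if $\varphi_3$ is nonzero only in degrees $k\ge2$, the factorization shows it factors through degree-one data anyway, forcing some degree-one value to be nonzero. Since $\varphi_3\ne0$ and $x_1\ne x_2$ excludes $k=0$, we conclude $\widehat\alpha(x_1)=x_2$.
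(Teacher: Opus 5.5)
Your argument is correct, but only through the fallback at the end of your last step. The blanket claim just before it, that $\varphi_1,\varphi_2$ vanish on positive-degree terms, is false exactly when $x_i$ is periodic, as you suspected. So is the stated goal of ruling out degree $\ge 2$ contributions; they can genuinely occur. Here is an example. Take $X=\{x_1,x_2\}$, $\widehat\alpha\equiv x_2$, $\CB=\Cb$, and $T=\left(\begin{smallmatrix}0&0\\ c&\lambda\end{smallmatrix}\right)$ with $c\lambda\neq0$ and $|c|^2+|\lambda|^2\le 1$. Extend $f\mapsto\operatorname{diag}(f(x_1),f(x_2))$ by $f(x_2)I$ to the Sch\"affer isometric dilation space of $T$. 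This gives an isometric covariant pair, because $T$ has zero first row. Compressing back to the co-invariant subspace $\Cb^2$ gives $\Phi\in\rep$ with $\Phi(\mathbf v)=T$, so $\varphi_2(\mathbf v)=\lambda\neq0$ and $\varphi_3(\mathbf v^2)=\lambda c\neq0$.

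What you actually need is only that $\varphi_3$ is nonzero on \emph{some} degree-one element. Your fallback gives this without any control of $\varphi_1,\varphi_2$. Suppose $k\ge 2$ were the least degree in which $\varphi_3$ is nonzero. Then $\varphi_3(st)=\varphi_3(s)\varphi_1(t)+\varphi_2(s)\varphi_3(t)=0$ for $s=\mathbf v u\otimes e$ and $t=\mathbf v^{k-1}f\otimes b$. Moreover $st=\mathbf v^k\alpha^{k-1}(u)f\otimes eb\to \mathbf v^k f\otimes b$, which is a contradiction. Your covariance computation then finishes the proof; it already covers $k=0$, so your first step is redundant. Do not take for granted that $\overline\Phi(g\otimes I)=\operatorname{diag}(g(x_1)I,g(x_2)I)$ with zero off-diagonal entry. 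This is the paper's Claim~1, and it uses non-degeneracy of $\Phi$ on $C_0(X)\otimes\diag(\CB)$.

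Apart from this, your skeleton matches the paper's: Claim~1, then vanishing of the corner in degree zero because $x_1\neq x_2$, then nonvanishing in degree one, then the covariance relation. Two steps are executed differently.

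First, the paper collects all degree-one information in a single operator. This is the $(2,1)$ entry $Z$ of $\overline\Phi(\mathbf w)$ for the isometric multiplier $\mathbf w=\mathbf v\otimes I$. Since $\Phi(\mathbf w^k s)=\overline\Phi(\mathbf w)^k\Phi(s)$ holds exactly, the fact that $Z=0$ would force $\Phi$ to be diagonal is immediate. This is your minimal-degree argument without the approximate factorizations.

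Second, for the final identification the paper uses three ingredients: the intertwining relation $\varphi_2(s)Z=Z\varphi_1((\alpha\otimes\id)(s))$ on $C_0(X)\otimes\diag(\CB)$, Schur's lemma, and the kernel computation of Lemma~\ref{lem;primchar}(ii). These need the $\pi_i$ irreducible and faithful. You instead pair the scalar multipliers $\overline\Phi(g\otimes I)$ with the relation $(g\otimes I)t=t(\alpha^k(g)\otimes I)$ for $t$ of degree $k$. This gives $g(x_2)\varphi_3(t)=g(\widehat\alpha^k(x_1))\varphi_3(t)$ directly. Your route is more elementary, since only non-degeneracy of the $\pi_i$ enters. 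It also yields the extra fact that $x_2=\widehat\alpha^k(x_1)$ in every degree $k$ where $\varphi_3$ is nonzero.

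The same shortcut would streamline the paper's last step. The $(2,1)$ entries of $\overline\Phi(g\otimes I)\overline\Phi(\mathbf w)=\overline\Phi(\mathbf w)\overline\Phi(\alpha(g)\otimes I)$ give $g(x_2)Z=g(\widehat\alpha(x_1))Z$.
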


\begin{proof}
Note that $(C_0(X) \rtimes_{\alpha} \Zb^{+})\otimes \CB$ demonstrates a semicrossed product-like structure. Indeed the isometry $\mathbf{w}:=\mathbf{v} \otimes I \in \Clm ((C_0(X) \rtimes_{\alpha} \Zb^{+})\otimes \CB)$ implements the action of $\alpha\otimes \id$ on $C_0(X)\otimes \CB \subseteq (C_0(X) \rtimes_{\alpha} \Zb^{+})\otimes \CB$, i.e., 
\[
    (f\otimes b) \mathbf{w}=\mathbf{w}(\alpha(f)\otimes b) = \mathbf{w}(f\circ\widehat{\alpha} \otimes b), \quad f \in C_0(X), b \in \CB.
\]
Furthermore, $(C_0(X) \rtimes_{\alpha} \Zb^{+})\otimes \CB$ is generated by $C_0(X) \otimes \CB$ and $\mathbf{w}$ in the sense that polynomials of the form
\[
f_0\otimes b_0+\mathbf{w}(f_1\otimes b_1)+\mathbf{w}^2 (f_2\otimes b_2)+\dots + \mathbf{w}^n(f_n\otimes b_n),
\]
$f_i \in C_0(X)$, $b_i \in \CB$, $0 \leq i \leq n$, are dense in $(C_0(X) \rtimes_{\alpha} \Zb^{+})\otimes \CB$.

Note that $(C_0(X) \rtimes_{\alpha} \Zb^{+})\otimes \CB$ is approximately unital. Let 
\[
\overline{\Phi}:\Clm((C_0(X) \rtimes_{\alpha} \Zb^{+})\otimes \CB)\longrightarrow \Clb(\CH\otimes \CH)
\]
    be the multiplier extension of $\Phi$ (see Proposition~\ref{prop:ext_to_double_dual}). We now establish the following two claims about $\overline{\Phi}$ before proceeding.

\vspace{.1in}
\noindent \textbf{Claim 1:}
$\overline{\Phi}(f \otimes I)=
        (\begin{smallmatrix}
          f(x_1)I & 0\\
         0 & f(x_2)I
        \end{smallmatrix})$, for all $f \in C_0(X)$.
\vspace{.1in}

\noindent By Lemma 2.9 in \cite{KaKaLi23}, 
\begin{align*}
    \diag((C_0(X) \rtimes_{\alpha} \Zb^{+})\otimes \CB) = C_0(X) \otimes \diag (\CB).
\end{align*}
Since $\ev_{x_i} \otimes \pi_i$, $i =1 ,2$ are irreducible, the restriction $\Phi|_{C_0(X) \otimes \diag (\CB)}$ is non-degenerate, i.e., 
\[
\Span \left\{(\Phi(g\otimes d)\xi\mid g \in C_0(X), d \in \diag(\CB), \xi \in \CH\otimes \CH \right\}
\]
is dense in $\CH\otimes \CH$. Then the claim follows from the following computation:
\begin{align*}
\overline{\Phi}(f \otimes I)(\Phi(g)\otimes d)\xi&=\Phi(fg\otimes d)\xi \\
                        &=\begin{pmatrix}
                        \varphi_1(fg\otimes d)& 0\\
                         0 & \varphi_2(fg\otimes d)
                         \end{pmatrix}\xi \\           
                        &=\begin{pmatrix}
                        fg(x_1)\otimes \pi_1(d)& 0\\
                         0 & fg(x_2)\otimes \pi_2(d)
                         \end{pmatrix}\xi  \\
                         &=\begin{pmatrix}
                        f(x_1)I& 0\\
                         0 & f(x_2)I
                         \end{pmatrix} \begin{pmatrix}
                        g(x_1)\otimes \pi_1(d)& 0\\
                         0 & g(x_2)\otimes \pi_2(d)
                         \end{pmatrix}\xi \\
                         &= \begin{pmatrix}
                        f(x_1)I& 0\\
                         0 & f(x_2) I
                         \end{pmatrix} \Phi(g\otimes d)\xi.
\end{align*}

\vspace{.1in}
\noindent \textbf{Claim 2:}
$\overline{\Phi}(\mathbf{w})=
       ( \begin{smallmatrix}
          X& 0\\
         Z& Y
        \end{smallmatrix})$ with $Z \neq0$.
       \vspace{.1in}

\noindent Indeed, by Claim 1, $\overline{\Phi}(C_0(X)\otimes I)=\Cb I\oplus \Cb I$, because $x_1\neq x_2$. Since 
\[
\Phi(C_0(X)\otimes \CB)\subseteq \overline{\Phi}(C_0(X)\otimes I)'
\]
we conclude that the (2,1)-entry in the block matrix presentation of $\Phi(C_0(X)\otimes \CB)$ is $0$. If $Z=0$, then $\overline{\Phi}(\mathbf{w})$ is also diagonal, and consequently $\Phi \left((C_0(X) \rtimes_{\alpha} \Zb^{+})\otimes \CB)\right) $ consists entirely of diagonal operators. This contradicts the assumption that $\varphi_3$ is nonzero, and hence the claim is proved. 

\vspace{.1in}

By comparing the $(2,1)$-entry of the matrices in the equation 
\begin{align*}
    \begin{pmatrix}
    \varphi_1(s) & 0\\
    0 & \varphi_2(s) 
\end{pmatrix}    
\begin{pmatrix}
    X & 0\\
    Z & Y
\end{pmatrix} 
&= \Phi(s)\overline{\Phi}(\mathbf{w})= \Phi(s\mathbf{w})\\
&=  \Phi(\mathbf{w} \left(\alpha\otimes \id(s))\right)\\
&= \overline{\Phi}(\mathbf{w})\Phi(\alpha\otimes \id(s))   \\
&= \begin{pmatrix}
    X & 0\\
    Z & Y
\end{pmatrix}
    \begin{pmatrix}
        \varphi_1(\alpha \otimes \id(s)) & 0\\
        0 & \varphi_2(\alpha \otimes \id(s)) 
\end{pmatrix},
\end{align*}
we obtain 
\begin{align*}
 \varphi_2(s)Z = Z\varphi_1(\alpha\otimes \id (s)), \, \mbox{ for all } s \in C_0(X)\otimes \diag(\CB).   
\end{align*}
Since $Z \neq 0$, Schur’s Lemma implies that the two irreducible representations 
\[
\varphi_1 \circ (\alpha\otimes \id) |_{C_0(X)\otimes \diag(\CB)}=\ev_{\widehat{\alpha}(x_1)}\otimes \pi_1 \mbox{ and } \varphi_2|_{C_0(X)\otimes \diag(\CB)}= \ev_{x_2}\otimes \pi_2
\] are unitarily equivalent and thus they have the same kernels. Recall that $\diag(\CB)$ is primitive. By Lemma~\ref{lem;primchar} (ii)
\[
\Clz \left(\widehat{\alpha}(x_1) \right)\otimes \diag(\CB)= \Clz(x_2)\otimes \diag(\CB)
\]
and hence $\widehat{\alpha}(x_1)=x_2$, as desired.
\end{proof}

Recall that two C$^*$-dynamical system $(\CA, \alpha)$ and $(\CB, \beta)$ are \textit{conjugate} if there exists a $*$-isomorphism $\gamma :\CA \to \CB$ such that 
\begin{align*}
 \alpha = \gamma^{-1} \circ \beta \circ \gamma.   
\end{align*}
Let $(\iota_{\CA}, \mathbf{v}_{\CA})$ (resp. $(\iota_{\CB}, \mathbf{v}_{\CB})$) denote the isometric covariant representation of $(\CA, \alpha)$ (resp. $(\CB, \beta)$) such that $\iota_{\CA} \rtimes \mathbf{v}_{\CA} (\CA(\alpha))$ (resp. $\iota_{\CB} \rtimes \mathbf{v}_{\CB} (\CB(\beta))$) generates $\CA \rtimes_{\alpha} \Zb^{+}$ (resp. $\CB \rtimes_{\beta} \Zb^{+}$). By the universal property of the semicrossed product, the isometric covariant representation $(\iota_{\CB}\circ \gamma, \mathbf{v}_{\CB})$ of $(\CA, \alpha)$ induces a completely isometric isomorphism 
\begin{align*}
    \CA \rtimes_{\alpha} \Zb^{+} \xrightarrow{\sim} \CB \rtimes_{\beta} \Zb^{+},
\end{align*}
whose inverse is induced by the isometric covariant representation $(\iota_{\CA} \circ \gamma^{-1}, \mathbf{v}_{\CA})$ of $(\CB, \beta)$.

\begin{thm} \label{mainthm;semicr}
Let $(C_0(X), \alpha)$ and $(C_0(Y), \beta)$ be two C$^*$-dynamical systems with $\alpha$ and $\beta$ non-degenerate, and let $\CB_X$ and $\CB_Y$ be operator algebras with irreducibly acting diagonals. If the operator algebras 
  \[
  (C_0(X) \rtimes_{\alpha} \Zb^{+} ) \otimes \CB_X \,\,\, \mbox{ and } \,\,\,(C_0(Y) \rtimes_{\beta} \Zb^{+} ) \otimes \CB_Y
  \]
  are isometrically isomorphic, then the C$^*$-dynamical systems 
  \[
  (C_0(X), \alpha)  \,\,\, \mbox{ and } \,\,\, (C_0(Y), \beta)
  \]
  are conjugate
 \end{thm}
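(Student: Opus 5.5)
Let $\CA_X := (C_0(X)\rtimes_\alpha\Zb^+)\otimes\CB_X$ and $\CA_Y := (C_0(Y)\rtimes_\beta\Zb^+)\otimes\CB_Y$, and let $\theta:\CA_X\to\CA_Y$ be an isometric isomorphism. The plan is to extract from $\theta$ a homeomorphism $\gamma:X\to Y$ and then show, using $\rep(\cdot)$ together with Lemma~\ref{lem:diag_rel}, that $\gamma$ intertwines $\widehat\alpha$ and $\widehat\beta$. The conjugacy $f\mapsto f\circ\gamma^{-1}$ then follows.

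\textbf{Step 1: the homeomorphism.} An isometric isomorphism restricts to a $*$-isomorphism of diagonals. By Lemma 2.9 of \cite{KaKaLi23}, the diagonal of $\CA_X$ is $C_0(X)\otimes\diag(\CB_X)$, and similarly for $Y$. So $\theta$ gives a $*$-isomorphism
\[
\theta_0: C_0(X)\otimes\diag(\CB_X)\to C_0(Y)\otimes\diag(\CB_Y),
\]
which induces a homeomorphism of primitive ideal spaces. Inclusion of ideals is preserved, so minimal primitive ideals go to minimal primitive ideals, and by Remark~\ref{rem:sub_top} the restriction to $\mprim$ is a homeomorphism for the hull-kernel topology. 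Both diagonals $\diag(\CB_X)$ and $\diag(\CB_Y)$ are primitive, so Proposition~\ref{prop:ideal_cor} turns this into a homeomorphism $\gamma:X\to Y$ with
\[
\theta_0(\Clz(x)\otimes\diag(\CB_X))=\Clz(\gamma(x))\otimes\diag(\CB_Y).
\]
Equivalently, if $\ev_y\otimes\pi$ is irreducible with $\pi$ faithful, then $(\ev_y\otimes\pi)\circ\theta_0=\ev_{\gamma^{-1}(y)}\otimes\pi'$ for some faithful irreducible $\pi'$, by Lemma~\ref{lem;primchar}.

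\textbf{Step 2: the intertwining.} The goal is $\widehat\beta\circ\gamma=\gamma\circ\widehat\alpha$. Fix $x\in X$ with $\widehat\alpha(x)\neq x$. Let $\sigma$ be a contractive representation of $\CB_X$ that is faithful and irreducible on its diagonal, and take $\pi=\ev_x\otimes\sigma$ on $C_0(X)\otimes\CB_X$. Mimicking Example~\ref{exam:rep_exp} for the semicrossed-like structure with $\mathbf w=\mathbf v\otimes I$, compress the Fock representation to its upper-left $2\times 2$ corner. This gives a representation $\Phi\in\rep(\CA_X)$ whose diagonal entries restrict to $\ev_x\otimes\sigma$ and $\ev_{\widehat\alpha(x)}\otimes\sigma$. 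Its $(2,1)$ entry is nonzero, because $\mathbf w(f\otimes b)$ maps to $f(x)\sigma(b)$, and non-degeneracy holds by Remark~\ref{rem:nondeg}.

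Now transport $\Phi$ through $\theta^{-1}$. The property of lying in $\rep$ is defined purely in terms of the diagonal, the $(2,1)$ corner and contractivity, all of which $\theta$ preserves, so $\Phi\circ\theta^{-1}\in\rep(\CA_Y)$. By Step 1 its diagonal entries restrict to $\ev_{\gamma(x)}\otimes\pi_1$ and $\ev_{\gamma(\widehat\alpha(x))}\otimes\pi_2$ with $\pi_i$ faithful. Since $\gamma$ is injective, $\gamma(x)\neq\gamma(\widehat\alpha(x))$, so Lemma~\ref{lem:diag_rel} applied to $(C_0(Y),\beta)$ gives $\widehat\beta(\gamma(x))=\gamma(\widehat\alpha(x))$.

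\textbf{Step 3: fixed points and conclusion.} The main obstacle is the points where the lemma says nothing: $x$ fixed by $\widehat\alpha$, or $\gamma(x)$ fixed by $\widehat\beta$. Applying the symmetric argument with $\theta^{-1}$, whenever $\widehat\beta(y)\neq y$ we get $\widehat\alpha(\gamma^{-1}y)=\gamma^{-1}\widehat\beta(y)$. So if $\widehat\alpha(x)\neq x$, Step 2 settles it. If $\widehat\beta(\gamma x)\neq\gamma x$, the symmetric statement settles it. The only remaining case is $\widehat\alpha(x)=x$ and $\widehat\beta(\gamma x)=\gamma x$, where the identity $\widehat\beta(\gamma(x))=\gamma(\widehat\alpha(x))$ is trivially true.

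Hence $\widehat\beta\circ\gamma=\gamma\circ\widehat\alpha$ everywhere. Setting $\Gamma(f)=f\circ\gamma^{-1}$ from $C_0(X)$ to $C_0(Y)$, we get $\alpha=\Gamma^{-1}\circ\beta\circ\Gamma$, which is the required conjugacy. The delicate points to check carefully are that the compressed Fock corner is genuinely non-degenerate, and that it remains a contractive homomorphism after tensoring with $\sigma$.
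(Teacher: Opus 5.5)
Your proposal is correct and follows the paper's proof essentially step for step. The paper also obtains the homeomorphism from the minimal primitive ideals of the diagonals via Proposition~\ref{prop:ideal_cor}, then transports the Fock-corner representation $\Phi_{\ev_x}\otimes\sigma\in\rep$ through the isomorphism, applies Lemma~\ref{lem:diag_rel} at non-fixed points, and handles fixed points by symmetry. You swap the roles of $X$ and $Y$, and your direct case split is just a cleaner phrasing of the paper's ``non-fixed points go to non-fixed points'' argument. The contractivity of $\Phi_{\ev_x}\otimes\sigma$ for a merely contractive $\sigma$, which you flag, is glossed over in the paper as well. Non-degeneracy holds because the diagonal part is a direct sum of two irreducible representations, not because of Remark~\ref{rem:nondeg}.
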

 
 \begin{proof}
By Lemma 2.9 in \cite{KaKaLi23}, 
\begin{align*}
    &\diag((C_0(X) \rtimes_{\alpha} \Zb^{+})\otimes \CB_X) = C_0(X) \otimes \diag (\CB_X), \\
    &\diag((C_0(Y) \rtimes_{\alpha} \Zb^{+})\otimes \CB_Y) = C_0(Y) \otimes \diag (\CB_Y).
\end{align*}
 Let 
 \[
 \gamma: \left( C_0(X) \rtimes_{\alpha} \Zb^{+} \right) \otimes \CB_X \longrightarrow (C_0(Y) \rtimes_{\beta} \Zb^{+} ) \otimes \CB_Y
 \]
 be an isometric isomorphism. Then $\gamma$ restricts to a $*$-isomorphism from $C_0(X) \otimes \diag (\CB_X)$ onto $C_0(Y) \otimes \diag (\CB_Y)$, and induces a homeomorphism
 \begin{align*}
     \widetilde{\gamma} : \mprim(C_0(Y)\otimes \diag(\CB_Y))  &\longrightarrow \mprim(C_0(X)\otimes\diag(\CB_X))\\
     \CJ \qquad &\longmapsto \quad \gamma^{-1}(\CJ).
 \end{align*}
Recall that both $\diag(\CB_X)$ and $\diag(\CB_Y)$ are primitive. By Proposition~\ref{prop:ideal_cor}, there exists a homeomorphism $\widehat{\gamma}: Y\rightarrow X$ such that
\begin{align*}
 \widetilde{\gamma}\left(\Clz(y)\otimes \diag(\CB_Y)\right) = \Clz(\,\widehat{\gamma}(y))\otimes \diag(\CB_X), \quad \forall y \in Y.
\end{align*}

Let $\widehat{\alpha}$ and $\widehat{\beta}$ be the proper continuous maps implementing $\alpha$ and $\beta$, respectively. We will show that $\widehat{\gamma}$ implements the desired conjugacy between the systems $(C_0(X), \alpha)$ and $(C_0(Y), \beta)$ by verifying that
\begin{equation} \label{eq;desired_conj}
\widehat{\gamma}\circ \widehat{\beta}=\widehat{\alpha}\circ \widehat{\gamma}.
\end{equation}

Choose and fix a contractive representation $\pi$ of $\CB_Y$ such that its restriction to $\diag(\CB_Y)$ is a faithful irreducible representation. For each $y \in Y$ with $\widehat{\beta}(y)\neq y$, let $\Phi_{\ev_y}$ denote the presentation of $C_0(Y) \otimes_{\beta} \Zb^{+}$ defined in Example~\ref{exam:rep_exp}, where $\ev_y$ is the evaluation at $y$. Then 
\[
\Phi_{\ev_y}\otimes \pi \in \rep\left( (C_0(Y) \rtimes_{\beta} \Zb^{+} ) \otimes \CB_Y\right).
\]
Note that, for every $z \in Y$, 
\[
    \ker\left((\ev_z\otimes \pi)\circ\gamma\right) = \widetilde{\gamma} ( \ker (\ev_z\otimes \pi) )=\Clz \left (\widehat{\gamma}(z)\right)\otimes \diag(\CB_X).
\]
Then, by Lemma~\ref{lem;primchar}, there exists a faithful irreducible representation $\pi_{\gamma, z}$ such that 
\begin{equation} \label{eq;2matr}
(\ev_z\otimes \pi)\circ\gamma = \ev_{\widehat{\gamma}(z)}\otimes \pi_{\gamma, z}.
\end{equation}
Since $(\Phi_{\ev_y}\otimes \pi)\circ \gamma \in \rep \left(C_0(X) \rtimes_{\alpha} \Zb^{+} ) \otimes \CB_X\right)$, we have, for every $s \in C_0(X)\otimes \diag(B_X)$,
\begin{equation} \label{2m}
\begin{aligned}
\left( (\Phi_{\ev_y}\otimes \pi)\circ \gamma\right) (s)&= 
 \begin{pmatrix}
       \left( ( \ev_y\otimes \pi) \circ \gamma\right)(s) & 0\\
        0 &  \left( ( \ev_{\widehat{\beta}(y)}\otimes \pi) \circ \gamma\right)(s) 
        \end{pmatrix} \\
        &=\begin{pmatrix}
        \left ( \ev_{\widehat{\gamma}(y)}\otimes \pi_{\gamma, y} \right)(s) & 0\\
        0 &  \left( \ev_{\widehat{\gamma}(\widehat{\beta}(y))}\otimes \pi_{\gamma, \widehat{\beta}(y)}\right)(s) 
        \end{pmatrix}, 
\end{aligned}
\end{equation}
with the second equality following from \eqref{eq;2matr}. Recall that $\widehat{\gamma}$ is a homeomorphism. Hence $\widehat{\gamma}(y)\neq\widehat{\gamma}(\widehat{\beta}(y))$. By Lemma~\ref{lem:diag_rel}, it follows from \eqref{2m} that 
\begin{equation*} 
\widehat{\alpha}(\widehat{\gamma}(y)) = \widehat{\gamma} (\widehat{\beta}(y))
\end{equation*}
for every $y$ with $\widehat{\beta}(y) \neq y$. In particular, $\widehat{\gamma}(y) \neq \widehat{\alpha}(\widehat{\gamma}(y))$, so $\widehat{\gamma}$ maps non-fixed points to non-fixed points. By symmetry, the same holds for $\widehat{\gamma^{-1}}=\widehat{\gamma}^{-1}$. Hence $\widehat{\gamma}$ must map fixed points to fixed points. Therefore, the \eqref{eq;desired_conj} holds.
\end{proof}

\begin{cor}\label{thm:dy_case}
    Let $(C_0(X), \alpha)$ and $(C_0(Y), \beta)$ be two C$^*$-dynamical systems with $\alpha$ and $\beta$ non-degenerate, and let $\CC_{X}$ and $\CC_{Y}$ be primitive C$^*$-algebras. Then the following statements are equivalent: 
    \begin{enumerate}
        \item[(i)] $\left (C_0(X) \rtimes_{\alpha} \Zb^{+} \right) \otimes \CC_{X}$ and $\left (C_0(Y) \rtimes_{\beta} \Zb^{+} \right) \otimes \CC_{Y}$ are (completely) isometrically isomorphic;
        \item[(ii)] the C$^*$-dynamical systems $(C_0(X), \alpha)$ and $(C_0(Y), \beta)$ are conjugate and the C$^*$-algebras $\CC_{X}$ and $\CC_{Y}$ are $*$-isomorphic.
    \end{enumerate}
\end{cor}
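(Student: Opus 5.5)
The plan is to prove the two implications separately; the bulk of the work is already contained in Theorem~\ref{mainthm;semicr}.

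\emph{(ii) $\Rightarrow$ (i).} Let $\gamma: C_0(X) \to C_0(Y)$ be a $*$-isomorphism with $\alpha = \gamma^{-1}\circ\beta\circ\gamma$. The discussion preceding Theorem~\ref{mainthm;semicr} supplies a completely isometric isomorphism $C_0(X)\rtimes_\alpha \Zb^+ \to C_0(Y)\rtimes_\beta \Zb^+$. Let $\theta:\CC_X\to\CC_Y$ be a $*$-isomorphism; it is automatically completely isometric. The minimal tensor product of two complete isometries between operator algebras is again a complete isometry, by injectivity of the minimal tensor product for operator spaces. Applying this to the two maps yields a completely isometric isomorphism of the stabilized algebras in (i).

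\emph{(i) $\Rightarrow$ (ii).} Every primitive C$^*$-algebra has a faithful irreducible representation. Its diagonal is the algebra itself, so $\CC_X$ and $\CC_Y$ have irreducibly acting diagonals. Theorem~\ref{mainthm;semicr} therefore applies to an isometric isomorphism
\[
\gamma:(C_0(X)\rtimes_\alpha\Zb^+)\otimes\CC_X \to (C_0(Y)\rtimes_\beta\Zb^+)\otimes\CC_Y,
\]
and gives conjugacy of the two dynamical systems.

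It remains to show $\CC_X\cong\CC_Y$. By Lemma 2.9 of \cite{KaKaLi23}, $\gamma$ restricts to a $*$-isomorphism $C_0(X)\otimes\CC_X\to C_0(Y)\otimes\CC_Y$. Pick $y\in Y$ and let $\widehat\gamma(y)\in X$ be the corresponding point from the proof of Theorem~\ref{mainthm;semicr}. Then $\gamma$ carries the minimal primitive ideal $\Clz(\widehat\gamma(y))\otimes\CC_X$ onto $\Clz(y)\otimes\CC_Y$. Passing to quotients gives a $*$-isomorphism
\[
(C_0(X)\otimes\CC_X)/(\Clz(\widehat\gamma(y))\otimes\CC_X)\;\cong\;(C_0(Y)\otimes\CC_Y)/(\Clz(y)\otimes\CC_Y).
\]
By the exact sequence in the proof of Lemma~\ref{lem;primchar}(ii), the left-hand side is $\CC_X$ and the right-hand side is $\CC_Y$, so $\CC_X\cong\CC_Y$.

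The only step needing care is this last one. It relies on the fact that $\gamma$ maps $\Clz(\widehat\gamma(y))\otimes\CC_X$ onto $\Clz(y)\otimes\CC_Y$ exactly, rather than merely into it. This holds because $\widetilde\gamma$ is defined via $\gamma^{-1}$ and $\gamma$ is a bijective $*$-isomorphism on the diagonals. With that, the quotient identification via $\ev_x\otimes\id$ is immediate.
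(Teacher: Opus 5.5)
Your proposal is correct and matches the paper's argument. Conjugacy comes directly from Theorem~\ref{mainthm;semicr}, and $\CC_X\cong\CC_Y$ follows because $\gamma$ carries $\Clz(\widehat\gamma(y))\otimes\CC_X$ onto $\Clz(y)\otimes\CC_Y$, so the two quotients are $*$-isomorphic; the paper phrases this as $(\ev_y\otimes\id)\circ\gamma=\ev_x\otimes\pi$ with $\pi$ faithful. Your explicit proof of (ii)$\Rightarrow$(i), via injectivity of the minimal tensor product, fills in a direction the paper leaves implicit.
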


\begin{proof}
In light of Theorem \ref{mainthm;semicr}, it suffices to show that the existence of an isometric isomorphism
\begin{align*}
    \gamma: \left (C_0(X) \rtimes_{\alpha} \Zb^{+} \right) \otimes \CC_{X} \to \left (C_0(Y) \rtimes_{\beta} \Zb^{+} \right) \otimes \CC_{Y}
\end{align*}
ensures that $\CC_X$ and $\CC_Y$ are $*$-isomorphic. Without loss of generality, we may assume that $\CC_Y$ acts irreducibly on a Hilbert space $\CH$. By the proof of Theorem~\ref{mainthm;semicr}, there exists $x \in X$ and a faithful $*$-representation $\pi: \CC_X \to \Clb(\CH)$ such that
    \begin{align*}
        (\ev_x \otimes \pi)(s) = (\ev_y \otimes \id) \circ \gamma(s), \quad \forall s \in C_0(X) \otimes \CC_X. 
    \end{align*}
Therefore,
\begin{align*}
    \CC_X \simeq  (C_0(X) \otimes \CC_X)/ (\Clz(x) \otimes \CC_X) \simeq  (\ev_y \otimes \id)\left (
    C_0(Y) \otimes \CC_Y \right) \simeq \CC_Y.
\end{align*}
\end{proof}

\begin{question}
    The Corollary~\ref{thm:dy_case} strengthens a recent result of Kakariadis, Katsoulis and Li (Corollary 3.5 in \cite{KaKaLi23}), and can be further examined from at least two perspectives. The first is to consider whether the statements in the theorem still hold if the condition that the $*$-endomorphisms are non-degenerate is removed. The second is to consider the case of multivariable C$^*$-dynamical systems. A multivariable C$^*$-dynamical system is a pair $(\CA, (\alpha_1, \ldots, \alpha_n))$, where $\CA$ is a C$^*$-algebra and each $\alpha_i$ is a $*$-endomorphism of $\CA$. The notions of tensor algebras and semicrossed products associated with multivariable C$^*$-dynamical systems were introduced in \cite{DK11} as generalizations of the semicrossed products of single-variable C$^*$-dynamical systems. In light of the previous results in this section, it is natural to ask whether stably isomorphic tensor algebras of multivariable commutative dynamical systems are necessarily isomorphic.
\end{question}

In the proof of Lemma~\ref{lem:diag_rel}, we commented that algebras of the form $(C_0(X) \rtimes_{\alpha} \Zb^{+})\otimes \CB$ demonstrate a semicrossed product-like structure. Our next result makes this comment very precise for the most significant case addressed in this paper.

\begin{prop}\label{prop:semi_K_iso}
    Let $(\CA, \alpha)$ be a C$^*$-dynamical system with $\alpha$ non-degenerate. Then 
    \begin{align*}
        (\CA \rtimes_{\alpha} \Zb^{+}) \otimes \Clk \simeq (\CA \otimes \Clk) \rtimes_{\alpha \otimes \id} \Zb^{+}. 
    \end{align*}
\end{prop}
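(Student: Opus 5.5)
We construct mutually inverse completely contractive homomorphisms from the universal properties on both sides. Write $(\iota,\mathbf{v})$ for the universal isometric covariant pair of $(\CA,\alpha)$, and $(j,\mathbf{u})$ for that of $(\CA\otimes\Clk,\alpha\otimes\id)$. Since $\alpha$ is non-degenerate, so is $\alpha\otimes\id$: a cai $\{e_i\otimes f_k\}$ of $\CA\otimes\Clk$ built from cais of the factors is mapped to $\{\alpha(e_i)\otimes f_k\}$, which is again a cai. This is what makes Remark~\ref{rem:nondeg} and the multiplier-extension results (Propositions~\ref{prop:ext_to_double_dual} and \ref{prop:ext_to_mul}) available on both sides.

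\emph{Forward map.} Represent $(\CA\rtimes_\alpha\Zb^+)\otimes\Clk$ faithfully and non-degenerately on $\CH\otimes\CK$. As in the proof of Lemma~\ref{lem:diag_rel}, $\mathbf{w}:=\mathbf{v}\otimes I$ is an isometry in the multiplier algebra. It satisfies $(a\otimes k)\mathbf{w}=\mathbf{w}(\alpha(a)\otimes k)$, with $\iota\otimes\id:\CA\otimes\Clk\to\Clb(\CH\otimes\CK)$ a $*$-homomorphism, hence completely contractive. So $(\iota\otimes\id,\mathbf{w})$ is an isometric covariant representation of $(\CA\otimes\Clk,\alpha\otimes\id)$. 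The universal property then gives a completely contractive homomorphism
\[
\Psi:(\CA\otimes\Clk)\rtimes_{\alpha\otimes\id}\Zb^+\to(\CA\rtimes_\alpha\Zb^+)\otimes\Clk,\qquad \Psi(\mathbf{u}^n j(a\otimes k))=\mathbf{v}^n a\otimes k .
\]
The range lies in the right algebra and is dense, since $\mathbf{v}^n a\otimes k$ span a dense subspace.

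\emph{Inverse map.} Represent $(\CA\otimes\Clk)\rtimes\Zb^+$ faithfully and non-degenerately on some $\CL$. By Remark~\ref{rem:nondeg} we may assume $j$ is non-degenerate, and we extend it to $\overline{j}:\Clm(\CA\otimes\Clk)\to\Clb(\CL)$. Set $\pi(a):=\overline{j}(a\otimes I)$ and $\sigma(k):=\overline{j}(I\otimes k)$; these are commuting $*$-representations of $\CA$ and $\Clk$. Since $\sigma$ is a non-degenerate representation of $\Clk$, we have $\CL\simeq\CL_0\otimes\CK$ with $\sigma(k)=I\otimes k$. Then $\pi(\CA)\subset\sigma(\Clk)'=\Clb(\CL_0)\otimes I$, so $\pi=\pi_0\otimes I$.

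The key check is that $\mathbf{u}$ commutes with $\sigma(\Clk)$. We compute
\[
\mathbf{u}\,\overline{j}(I\otimes k)=\lim_i \mathbf{u}\, j(e_i\otimes k)=\lim_i j(e_i\otimes k)\,\mathbf{u},
\]
using $j(a\otimes k)\mathbf{u}=\mathbf{u}\,j(\alpha(a)\otimes k)$ together with the fact that $\alpha(e_i)$ is a cai, passing to strict/WOT limits. Hence $\mathbf{u}=V_0\otimes I$ with $V_0$ an isometry on $\CL_0$, and $(\pi_0,V_0)$ is an isometric covariant pair for $(\CA,\alpha)$. It therefore induces a completely contractive $\rho:\CA\rtimes_\alpha\Zb^+\to\Clb(\CL_0)$. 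Tensoring with $\id_{\Clk}$, which is completely contractive on the minimal tensor product, gives
\[
\rho\otimes\id:(\CA\rtimes_\alpha\Zb^+)\otimes\Clk\to\Clb(\CL_0\otimes\CK),
\]
sending $\mathbf{v}^na\otimes k\mapsto (V_0^n\pi_0(a)\otimes k)=\mathbf{u}^n j(a\otimes k)$.

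\emph{Conclusion.} $\Psi$ and $\rho\otimes\id$ are mutually inverse on dense subalgebras, and both are completely contractive. They are therefore completely isometric inverses, which yields the isomorphism.

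The main obstacle is the inverse map, specifically showing that $\mathbf{u}$ factors as $V_0\otimes I$. This needs careful bookkeeping of multiplier extensions and of the convergence $\lim_i$. If the WOT limit argument is delicate, I would instead compute $\mathbf{u}\, j(e_i\otimes k)$ against vectors in $\overline{j(\CA\otimes\Clk)\CL}=\CL$, using the cai $\alpha(e_i)$ to move $\mathbf{u}$ across.
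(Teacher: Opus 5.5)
Your proposal is correct and is essentially the paper's argument. The paper also realizes $(\CA\rtimes_\alpha\Zb^+)\otimes\Clk$ through the covariant pair $(\iota\otimes\id,\mathbf{v}\otimes I)$, uses that a non-degenerate representation of $\Clk$ is a multiple of the identity to write a non-degenerate covariant pair of $(\CA\otimes\Clk,\alpha\otimes\id)$ as $(\pi_0\otimes I, V_0\otimes I)$, and then applies $\varphi\otimes\id$; the only difference is that it does this for every covariant pair (verifying the universal property) rather than only for the universal pair as you do. One small fix: your displayed commutation should read $\mathbf{u}\,j(\alpha(e_i)\otimes k)=j(e_i\otimes k)\,\mathbf{u}$, combined with the fact that $\overline{j}(I\otimes k)$ is the strong limit of both $j(e_i\otimes k)$ and $j(\alpha(e_i)\otimes k)$, exactly as in the paper's computation.
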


\begin{proof}
Assume that $(\iota, \mathbf{v})$ is the universal isometric covariant representation associated with the C$^*$-dynamical system $(\CA, \alpha)$. Then $(\iota \otimes \id, \mathbf{v} \otimes I)$ is an isometric covariant representation of the C$^*$-dynamical system $(\CA \otimes \Clk, \alpha \otimes \id)$, and the operator algebra generated by the image of $(\iota \otimes \id) \rtimes_{\alpha \otimes \id} (\mathbf{v} \otimes I)$ is $(\CA \rtimes_{\alpha} \Zb^{+}) \otimes \Clk$. We prove the proposition by showing that $(\CA \rtimes_{\alpha} \Zb^{+}) \otimes \Clk$ satisfies the same universal property as $(\CA \otimes \Clk) \rtimes_{\alpha \otimes \id} \Zb^{+}$. 

Given an isometric covariant representation $(\pi, V)$ of $(\CA \otimes \Clk, \alpha \otimes \id)$ on $\Clb(\CH)$ such that $\pi$ is non-degenerate (see Remark~\ref{rem:nondeg}), the map $\pi$ extends uniquely to a $*$-homomorphism 
\begin{align*}
    \overline{\pi}: \Clm(\CA) \otimes \Clb(\CK) \to \Clb(\CH),
\end{align*}
since $\Clm(\CA) \otimes \Clb(\CK) \subset \Clm(\CA \otimes \Clk)$ (see Proposition~\ref{prop:ext_to_mul} or Proposition 2.1 in \cite{EL95}). Recall that the non-degenerate representation $\overline{\pi}(I \otimes -): \Clk \to \Clb(\CH)$ is equivalent to a multiple of the identity representation of $\Clk$, i.e., there exists a Hilbert space $\CH_0$ and a unitary $U: \CH_0 \otimes \CK \to \CH$ such that 
\begin{align*}
    U^*\overline{\pi}(I \otimes K)U = I \otimes K, \quad \forall K \in \Clk
\end{align*}
(see, for example, \cite[Corollary 1 in Section 1.4]{WA76}). By replacing $\pi$ with $U^* \pi(\cdot)U$ and $V$ with $U^*VU$, we may assume that $\CH = \CH_0 \otimes \CK$ and 
\begin{align*}
    \overline{\pi}(I \otimes K) = I \otimes K, \quad \forall K \in \Clk.
\end{align*}

    Let $\{E_{kl}\}_{k,l \in \Nb}$ be a system of matrix units for $\Clb(\CK)$, and let $\{E_i\}_{i \in \mathbb{I}}$ be an approximate unit of $\CA$. Then $\{E_i \otimes F_j\}_{(i,j) \in \mathbb{I} \times \Nb}$ forms an approximate unit of $\CA \otimes \Clk$, where $F_j = \sum_{k=1}^j E_{kk}$. Note that, for every $K \in \Clk$, we have 
\begin{align*}
    (I \otimes K) V &= \lim_{(i,j)} \pi(E_i \otimes F_j) \overline{\pi}(I \otimes K) V\\
    &=\lim_{(i,j)} \pi(E_i \otimes F_jK) V =  \lim_{(i,j)} V \pi(\alpha(E_i) \otimes F_jK)\\
    &= V\lim_{(i,j)} \pi(\alpha(E_i) \otimes F_j) \overline{\pi}(I \otimes K) = V (I \otimes K),
\end{align*}
since
\begin{align*}
    \pi(E_i \otimes F_j) \xrightarrow{\Wot} I, \quad \pi(\alpha(E_i) \otimes F_j) \xrightarrow{\Wot} I.
\end{align*}
Therefore, $V = V_0 \otimes I$, where $V_0$ is an isometry in $\Clb(\CH_0)$. Similarly, for all $A \in \CA$ and $K \in \Clk$, 
\begin{align*}
    \overline{\pi}(A \otimes I) (I \otimes K) = \pi(A \otimes K) = (I \otimes K) \overline{\pi}(A \otimes I), 
\end{align*}
so there exists a $*$-homomorphism $\pi_0: \CA \to \Clb(\CH_0)$ such that 
\begin{align*}
    \overline{\pi}(A \otimes I) = \pi_0(A) \otimes I, \quad \forall A \in \CA. 
\end{align*}
By
\begin{align*}
    (\pi_0(A)V_0) \otimes E_{11} = \pi(A \otimes E_{11}) V = V \pi(\alpha(A) \otimes E_{11}) = (V_0 \pi_0(\alpha(A)) \otimes E_{11},  
\end{align*}
    we conclude that $\pi_0(A) V_0 = V_0 \pi(\alpha(A))$, so $(\pi_0, V_0)$ is an isometric covariant representation of $(\CA, \alpha)$. Let $\varphi: \CA \rtimes_{\alpha} \Zb^{+} \to \Clb(\CH_0)$ be the completely contractive homomorphism such that 
    \begin{align*}
        \pi_0 \rtimes_{\alpha} V_0 = \varphi \circ (\iota \rtimes_{\alpha} \mathbf{v}). 
    \end{align*}
Then $\varphi \otimes \id: (\CA \rtimes_{\alpha} \Zb^{+}) \otimes \Clk \to \Clb(\CH_0 \otimes \CK)$ is a completely contractive homomorphism satisfying 
\begin{align*}
    \pi \rtimes_{\alpha} V = (\varphi \otimes \id) \circ [(\iota \otimes \id) \rtimes_{\alpha} (\mathbf{v} \otimes I)]. 
\end{align*}
Hence, $(\CA \rtimes_{\alpha} \Zb^{+}) \otimes \Clk$ is the universal operator algebra associated with the C$^*$-dynamical system $(\CA \otimes \Clk, \alpha \otimes \id)$.  
\end{proof}

\noindent\textbf{Acknowledgements.} 
Research by the first author (E. G. Katsoulis) is supported by the NSF Award 2054781.
Research by the second author (F. Miao) is supported by the NSFC under grant number 12426650, 12426664, 12571141.
Research by the third author (W. Wu) is supported by the NSFC under grant numbers 12571130.
Research by the fourth author (W. Yuan) is supported by the NSFC under grant numbers 12471124, 12571130.

\bibliographystyle{amsplain}
\bibliography{Bib}

@Article{Arv1,
  author  = {Arveson, W.},
  title   = {Operator algebras and measure preserving automorphisms},
  journal = {Acta Math.},
  volume  = {118},
  year    = {1967},
  pages   = {95--109},
  doi     = {10.1007/BF02392478},
}

@article{Arv2,
  author  = {Arveson, W.},
  title   = {Operator algebras and invariant subspaces},
  journal = {Ann. of Math. (2)},
  volume  = {100},
  year    = {1974},
  pages   = {433--532},
  doi     = {10.2307/1970956},
}

@Book{DavNest,
  author    = {Davidson, K.},
  title     = {Nest Algebras},
  series    = {Pitman Research Notes in Mathematics Series},
  volume    = {191},
  note      = {Triangular forms for operator algebras on Hilbert space},
  publisher = {Longman Scientific \& Technical},
  address   = {Harlow},
  year      = {1988},
  pages     = {xii+412},
  isbn      = {0-582-01993-1},
}

@Article{DK1,
  author  = {Davidson, K. and Katsoulis, E.},
  title   = {Semicrossed products of simple {$C^*$}-algebras},
  journal = {Math. Ann.},
  volume  = {342},
  number  = {3},
  year    = {2008},
  pages   = {515--525},
  doi     = {10.1007/s00208-008-0244-1},
}

@book{SZ19,
  author    = {Str\u{a}til\u{a}, S. and Zsid\'o, L.},
  title     = {Lectures on von {N}eumann Algebras},
  edition   = {2},
  series    = {Cambridge IISc Series},
  publisher = {Cambridge University Press},
  year      = {2019},
  doi       = {10.1017/9781108654975},
}

@Article{DKak,
  author  = {Davidson, K. and Kakariadis, E. T. A.},
  title   = {Conjugate dynamical systems on {$C^*$}-algebras},
  journal = {Int. Math. Res. Not.},
  number  = {5},
  year    = {2014},
  pages   = {1289--1311},
  doi     = {10.1093/imrn/rns253},
}

@Article{KRSigma,
  author  = {Katsoulis, E. and Ramsey, C.},
  title   = {The isomorphism problem for tensor algebras of multivariable dynamical systems},
  journal = {Forum Math. Sigma},
  volume  = {10},
  year    = {2022},
  pages   = {e81},
  doi     = {10.1017/fms.2022.73},
}

@Article{DK3,
  author  = {Davidson, K. and Katsoulis, E.},
  title   = {Dilation theory, commutant lifting and semicrossed products},
  journal = {Doc. Math.},
  volume  = {16},
  year    = {2011},
  pages   = {781--868},
}

@Article{Pow1,
  author  = {Power, S. C.},
  title   = {Classification of analytic crossed product algebras},
  journal = {Bull. Lond. Math. Soc.},
  volume  = {24},
  number  = {4},
  year    = {1992},
  pages   = {368--372},
  doi     = {10.1112/blms/24.4.368},
}

@Article{HH,
  author  = {Hadwin, D. and Hoover, T.},
  title   = {Operator algebras and the conjugacy of transformations},
  journal = {J. Funct. Anal},
  volume  = {77},
  number  = {1},
  year    = {1988},
  pages   = {112--122},
  doi     = {10.1016/0022-1236(88)90080-8},
}

@Book{KR19,
  author    = {Katsoulis, E. and Ramsey, C.},
  publisher = {AMS},
  title     = {Crossed products of operator algebras},
  series    = {Mem. Amer. Math. Soc.},
  volume    = {258},
  number = {1240},
  year      = {2019},
  doi ={https://doi.org/10.1090/memo/1240}
}

@Article{DK08,
    author={Davidson, K. and Katsoulis, E. G.},
    title={Isomorphisms between topological conjugacy algebras},
    volume={621},
    journal={J. Reine Angew. Math.},
    year={2008},
    pages={29-51}
}

@Article{DEG20,
    author={Dor-On, A. and Eilers, S. and Geﬀen, S.},
    title={Classification of irreversible and reversible Pimsner operator algebras},
    volume={156},
    journal={Compos. Math.},
    year={2020},
    pages={2510–2535}
}

@Book {Davlast,
    author = {Davidson, K.},
     title = {Functional analysis and operator algebras},
    series = {CMS/CAIMS Books in Mathematics},
    volume = {13},
 publisher = {Springer, Cham},
      year = {[2025] \copyright 2025},
     pages = {xiv+797},
      isbn = {978-3-031-63664-6; 978-3-031-63665-3},
   mrclass = {46-01 (46Lxx 47-01 47Lxx)},
  mrnumber = {4901168},
       doi = {10.1007/978-3-031-63665-3},
       url = {https://doi.org/10.1007/978-3-031-63665-3},
}

@Article {KakK1,
    author = {Kakariadis, E. T. A. and Katsoulis, E.},
     title = {Semicrossed products of operator algebras and their {${\rm
              C}^*$}-envelopes},
   journal = {J. Funct. Anal.},
  fjournaL = {Journal of Functional Analysis},
    volume = {262},
      year = {2012},
    number = {7},
     pages = {3108--3124},
      issn = {0022-1236,1096-0783},
   mrclass = {47L55 (46L07)},
  mrnumber = {2885949},
mrreviewer = {Benton\ L.\ Duncan},
       doi = {10.1016/j.jfa.2012.01.002},
       url = {https://doi.org/10.1016/j.jfa.2012.01.002},
}

@Article{BGR77,
    author={Brown, L. G. and Green, P. and Rieffel, M. A.},
    title={Stable isomorphism and strong Morita equivalence of {C}$^*$-algebras},
    volume={77},
    journal={Pac. J. Math.},
    year={1977},
    pages={349-363}
}

@Article{AJ69,
    author={Arveson, W. and Josephson, K.},
    title={Operator algebras and measure preserving automorphisms II},
    volume={4},
    journal={J. Funct. Anal.},
    year={1969},
    pages={100-134}
}

@Article{Hou10,
    author={Hou, C.},
    title={Cohomology of a class of Kadison-Singer algebras},
    volume={53},
    journal={Sci. China Math.},
    year={2010},
    pages={1827-1839}
}

@Article{GE16,
    author={Eleftherakis, G. K.},
    title={Stable isomorphism and strong Morita equivalence of operator algebras},
    volume={42},
    journal={Houston J. Math.},
    number={4}, 
    year={2016},
    pages={1245-1266}
}

@Article{EK17,
    author={Eleftherakis, G. K. and Kakariadis, E. T. A.},
    title={Strong Morita equivalence of operator spaces},
    volume={446},
    journal={J. Math. Anal. Appl.},
    number={2}, 
    year={2017},
    pages={1632-1653}
}

@Article{GE19,
    author={Eleftherakis, G. K.},
    title={On stable maps of operator algebras},
    volume={472},
    journal={J. Math. Anal. Appl.},
    number={2}, 
    year={2019},
    pages={1951-1975}
}

@Article{GE08,
    author={Eleftherakis, G. K.},
    title={A Morita type equivalence for dual operator algebras},
    volume={212},
    journal={J. Pure. Appl. Algebra},
    number={5}, 
    year={2008},
    pages={1060-1071}
}

@Article{GE10,
    author={Eleftherakis, G. K.},
    title={Morita type equivalences and reflexive algebras},
    volume={64},
    journal={J. Operator Theory},
    number={1}, 
    year={2010},
    pages={3-17}
}

@Article{EP08,
    author={Eleftherakis, G. K. and Paulsen, V. I.},
    title={Stably isomorphic dual operator algebras},
    volume={341},
    journal={Math. Ann.},
    number={1}, 
    year={2008},
    pages={99-112}
}

@Article{JP84,
    author={Peters, J.},
    title={Semicrossed products of {C}$^*$-algebras},
    volume={59},
    journal={J. Funct. Anal.},
    year={1984},
    pages={498-534}
}

@Article{DK11,
    author={Davidson, K. and Katsoulis, E. G.},
    title={Operator algebras for multivariable dynamics},
    journal={Mem. Amer. Math. Soc.},
    volume={209},
    year={2011},
    pages={viii+53 pp.}
}

@Article{KaKaLi23,
    author = {Kakariadis, E. T. A. and Katsoulis, E. and Li, X.},
    title = {Stable Isomorphisms of Operator Algebras},
    journal = {Int. Math. Res. Not.},
    volume = {2024},
    number = {5},
    pages = {4094-4118},
    year = {2023},
    issn = {1073-7928},
    doi = {10.1093/imrn/rnad146},
    url = {https://doi.org/10.1093/imrn/rnad146},
    eprint = {https://academic.oup.com/imrn/article-pdf/2024/5/4094/56901301/rnad146.pdf}
}

@Book{EL95,
  author    = {Lance, E. C.},
  publisher = {Cambridge University Press},
  title     = {Hilbert {C}$^*$-{Modules}, A Toolkit for Operator Algebraists},
  series    = {London Mathematical Society Lecture Note Series},
  volume    = {210},
  year      = {1995},
  doi = {https://doi.org/10.1017/CBO9780511526206},
}

@Book{WA76,
  author    = {Arveson, W.},
  publisher = {Springer Cham},
  title     = {An Invitation to {C}$^*$-Algebras},
  series    = {GTM},
  volume    = {39},
  year      = {1976},
}

@Book{KL17,
  author    = {Landsman, K.},
  publisher = {Springer Cham},
  title     = {Foundations of Quantum Theory: From Classical Concepts to Operator Algebras},
  series    = {Fundamental Theories of Physics},
  volume    = {188},
  year      = {2017},
  doi = {https://doi.org/10.1007/978-3-319-51777-3},
}

@Book{SS03,
  author    = {Stein, E. M. and Shakarchi, R.},
  publisher = {Priceton University Press},
  title     = {Fourier Analysis: An Introduction},
  series    = {Princeton Lectures in Analysis},
  volume    = {1},
  year      = {2003},
}

@Book{GM90,
  author    = {Murphy, G. J.},
  publisher = {Academic Press},
  title     = {C$^*$-Algebras and Operator Theory},
  year      = {1990},
  doi = {https://doi.org/10.1016/C2009-0-22289-6}
}

@Book{BlMe04,
  author    = {Blecher, D. P. and Le Merdy, C.},
  publisher = {Oxford University Press},
  title     = {Operator Algebras and Their Modules, An operator space approach},
  series    = {London Mathematical Society Monographs New Series},
  volume    = {30},
  year      = {2004},
}

@Book{KR97,
  author    = {Kadison, R. V. and Ringrose, J. R.},
  publisher = {American Mathematical Society},
  title     = {Fundamentals of the theory of operator algebras, Vol.\ I, II},
  series    ={Graduate Studies in Mathematics},
  volume    = {15, 21},
  year      = {1997},
}

\end{document}